\newtheorem{teo}{Theorem}[section]
\newtheorem{lema}{Lemma}[section]
\newtheorem{pro}{Proposition}[section]
\newtheorem{defi}{Definition}[section]
\newtheorem{rmk}{Remark}[section]\newtheorem*{rmk*}{Remark}
\newtheorem{coro}{Corollary}[section]
\theoremstyle{remark}     
\newtheorem*{ex*}{Example}
\newtheorem*{exs*}{Examples}
\newtheorem*{acknowledgements}{Acknowledgements}
\def\sideremark#1{\ifvmode\leavevmode\fi\vadjust{\vbox to0pt{\vss
\hbox to 0pt{\hskip\hsize\hskip1em%
\vbox{\hsize2cm\tiny\raggedright\pretolerance10000%
\noindent #1\hfill}\hss}\vbox to8pt{\vfil}\vss}}}%
\theoremstyle{plain}      
\newtheorem{lemma}[rmk]{Lemma}
\newtheorem{theorem}[rmk]{Theorem}
\theoremstyle{definition} 
\newcommand{\bt}{\begin{theorem}}\newcommand{\et}{\end{theorem}}
\newcommand{\bl}{\begin{lemma}}\newcommand{\el}{\end{lemma}}
\newcommand{\bp}{\begin{proof}}\newcommand{\ep}{\end{proof}}
\newcommand{\be}{\begin{equation}}\newcommand{\ee}{\end{equation}}
\newcommand{\bdm}{\begin{displaymath}}
\newcommand{\edm}{\end{displaymath}}
\numberwithin{equation}{section}
\def \z{\zeta}
\newcommand{\D}{\curly{D}}
\renewcommand{\geq}{\geqslant}\renewcommand{\leq}{\leqslant}
\newcommand{\R}{\mathbb{R}}
\DeclareMathOperator{\im}{Im}
\renewcommand{\span}{\textsl{span}}
\newcommand{\grad}{\textsl{grad\,}}
\newcommand{\curly}{\mathscr}
\newcommand{\bb}{\mathbb}
\newcommand{\ba}{\begin{array}}\newcommand{\ea}{\end{array}}
\renewcommand{\&}{{\footnotesize \&}}
\begin{document}

\title[]{A splitting theorem for higher order parallel immersions}
\subjclass[2000]{}
\keywords{submanifold, higher order parallel fundamental form, splitting theorems}
\date{\today}
\author[I. Kath]{Ines Kath}
\author[P.-A. Nagy]{Paul-Andi Nagy} 
\address{Institut f\"ur Mathematik und Informatik, Ernst-Moritz-Arndt Universit\"at Greifs\-wald, Walther-Rathenau str.47, 
17487 Greifswald, Germany}
\email{ines.kath@uni-greifswald.de, nagy@math.auckland.ac.nz}

\frenchspacing

\begin{abstract} We consider isometric immersions into space forms having the second fundamental form parallel at order $k$. We show that this class of immersions consists 
of local products, in a suitably defined sense, of parallel immersions and normally flat immersions of flat spaces. 
\end{abstract}

\date{\today}
\maketitle

\section{Introduction} \label{intro}
A basic question in submanifold geometry is the study of isometric immersions of Riemannian manifolds having their second fundamental form subject to certain geometric or analytic properties. One of the best known examples is the class of immersions having parallel second fundamental form. Such immersions are called parallel. Their study 
is part of well established theories, for instance the Euclidean case has been fully described by Ferus \cite{F1, F2, F3}. 
Parallel immersions in space forms are the extrinsic counterpart of locally symmetric spaces \cite{Str}. 

In this spirit, a natural class to consider is that of {\it{$k$-parallel}} immersions, by requiring the second fundamental form be parallel at order $k$ for $k \geq 1$. At a pure analogy level, while each Riemannian manifold whose Riemannian curvature tensor is parallel at higher order is locally symmetric \cite{K-N} the class of $k$-parallel immersions is sensibly larger than that of parallel ones. Curves such as the Cornu spiral provide simple $2$-parallel, non-parallel examples.

Early work by Mirzoyan \cite{Mir78} on $k$-parallel immersions has been taken up by Dillen and Lumiste among others, see \cite{Lumbook} for an overview. In the case of immersions into space forms classification results have been obtained for low dimension or codimension or for small $k$ in 
\cite{L1,D1, Lum86, Lum00,Lum89, Lum90, Lum91}.  A case by case 
inspection therein reveals that the immersion is a product and each factor is either a $k$-parallel curve, a $k$-parallel flat and normally flat surface, an affine subspace or a sphere. 

That this reflects the general structure of $k$-parallel immersions into space forms is the main result of this note. We prove\\[1ex] 
{\bf Theorem } {\it
Any  $k$-parallel isometric immersion of a Riemannian manifold $(M,g)$ into a simply-connected space form is the local product 
of a parallel immersion and a $k$-parallel immersion of a flat space with flat normal bundle.}\\[1ex]

This is Theorem \ref{main1} in the paper. Products of immersions are understood in the sense of Noelker \cite{Noelker}, see Definition~\ref{def-1}. Open manifolds are naturally considered since in the compact case integration shows that $k$-parallel immersions are parallel.

To prove the theorem we first note that having the ambient space locally symmetric forces the Riemann curvature  
be parallel at higher order hence parallel by \cite{K-N}.
We refine the natural local product decomposition of $(M,g)$ 
into a flat space and a locally symmetric space with non-degenerate curvature. The key ingredient is to take into account 
the algebraic structure of the normal bundle reflecting the structure of higher order derivatives of the second fundamental form. It is used to modify the above mentioned splitting into one that satisfies the requirements 
in the generalised Moore splitting criterion \cite{Mol,Noelker}.

Our theorem reduces the study of $k$-parallel immersions into space forms 
to that of parallel immersions and $k$-parallel normally flat immersions of flat space. 
This is an essential step towards the classification of $k$-parallel immersions of space forms. Indeed, parallel immersions of space forms are known. The Euclidean case was solved by Ferus as already mentioned above. If the target space is a sphere one can easily get a classification from the Euclidean case. For the hyperboloid as target a classification was achieved independently by Takeuchi \cite{Tak} and Backes and Reckziegel \cite{BR}.
Furthermore, in \cite{Lum91} Lumiste describes a general method (the so-called polynomial map method) for the investigation of $k$-parallel immersions in Euclidean spaces that are flat and normally flat; see also  \cite{Mokhov} where the latter are interpreted as certain integrable systems of hydrodynamic type. In conclusion, to obtain a full classification there remains to understand the structure of flat, normally flat $k$-parallel immersions in hyperbolic space. 

\section{Structure results}
Let $(N^n,g)$ be a Riemannian manifold and let us consider an isometric immersion $(M^m,g) \hookrightarrow (N,g), m \leq n $. We will denote by $\nabla$ and $\nabla^N$, respectively, the Levi-Civita connections and 
by $R, R^N$ the Riemann curvature tensors, with the convention that $R^N(X,Y,Z,U)=g((\nabla^N)^{2}_{Y,X}Z-(\nabla^N)^{2}_{X,Y}Z,U)$ for all $X,Y,Z,U$ in $TN$. The orthogonal splitting 
$$ TN=TM \oplus NM
$$
along $M$, enables to consider the second fundamental form $\alpha :TM \times TM \to NM$, where $NM$ is the normal bundle of $M$. We denote by $\nabla^\perp$ the induced covariant derivative in $NM$ and by $R^\perp$ the corresponding curvature tensor. Furthermore, we write $\nabla \alpha$ for the covariant derivative of $\alpha$, which is defined by $\nabla$ and $\nabla^\perp$. 
\begin{defi} \label{k-p}
 Let $(M,g) \hookrightarrow (N,h)$ be an isometric immersion. It is called $k$-parallel for some $k \geq 1$ if and only if 
\begin{equation} \label{kpe}
\nabla^k \alpha=0.
\end{equation}
\end{defi}
A particular case thereof consists in the so-called {\it{parallel}} immersions, when one requires $\nabla \alpha=0$. 

In most of what follows we will look at immersions  into standard spaces $M^n(c)$; that is simply connected, complete spaces of constant sectional curvature $c$ and dimension $n$, realised by their standard models in $\bb{R}^{n+1}$: the sphere, the hyperboloid and the hyperplane.

The main goal of this paper is to prove that the study of $k$-parallel immersions into standard spaces reduces 
to that of parallel immersions and $k$-parallel immersions of flat space. The latter class will be shown later on to be normally flat as well. 

One of our main tools is Moore's decomposition criterion \cite{Moore} and its generalisations to the case of standard spaces due to \cite{Mol} (see also \cite{Noelker}). Its application requires setting up the notion of product immersion which we recall below in the Euclidean case, to begin with.
\begin{defi} \label{def-0}
 An isometric immersion $f:M_1 \times M_2 \to \bb{R}^n$ of a Riemannian product is called a product immersion if and only if 
$f=F \circ (f_1 \times f_2)$ for isometric immersions $f_i:M_i \to N_i, i=1,2$ where $N_1,N_2$ are affine sub-spaces of 
$\bb{R}^n$ such that $F:N_1 \times N_2 \to \bb{R}^n$ is isometric.
\end{defi}

The next definition captures necessary conditions for an immersion to be a product.
In the rest of this paper we will systematically use the natural identification $T(M_1 \times M_2)=TM_1 \oplus TM_2$ whenever $M_1,M_2$ are smooth manifolds.
 
\begin{defi}The second fundamental form $\alpha$ of an isometric immersion $f:M_1 \times M_2 \to M$ is called 
decomposable if $\alpha(\D_1,\D_2)=0$ where $\D_{i}=df(TM_i), i=1,2$.
\end{defi}
Having decomposable second fundamental has been shown, for Euclidean target spaces, to yield a product structure in \cite{Moore}; explicitly
\begin{teo} \label{M1}
An isometric immersion $f:M_1 \times M_2 \to \bb{R}^n$ where $M_1,M_2$ are connected and $f$ has decomposable second fundamental form is a product immersion.
\end{teo}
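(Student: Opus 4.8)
The plan is to exploit that the ambient space is flat, so that the Euclidean second derivatives of $f$ are governed entirely by $\alpha$ through the Gauss formula, and then to integrate. Write $\nabla^N$ for the flat connection of $\bb{R}^n$ and identify $f_\ast X$ with an $\bb{R}^n$-valued function, so that the Gauss formula reads $\nabla^N_X(f_\ast Y)=f_\ast(\nabla_X Y)+\alpha(X,Y)$ for vector fields $X,Y$ on $M_1\times M_2$. First I would take $X_1$ a vector field tangent to the first factor and $X_2$ tangent to the second, both lifted from $M_1$, $M_2$. Since the Levi-Civita connection of a Riemannian product is the product connection, $\nabla_{X_2}X_1=0$ for such lifts, and decomposability gives $\alpha(X_2,X_1)=0$; hence $\nabla^N_{X_2}(f_\ast X_1)=0$. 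Because $\bb{R}^n$ is flat, this says that $f_\ast X_1$ is constant along each slice $\{p_1\}\times M_2$, and symmetrically $f_\ast X_2$ is constant along $M_1\times\{p_2\}$.

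Next I would integrate this to produce the splitting $f=f_1+f_2$. Fixing base points $q_1\in M_1$, $q_2\in M_2$, set $f_1:=f(\,\cdot\,,q_2)$ and $f_2:=f(q_1,\,\cdot\,)-f(q_1,q_2)$. Since $f_\ast X_1$ does not depend on the $M_2$-coordinate, the $\bb{R}^n$-valued map $f(p_1,p_2)-f_1(p_1)$ has vanishing derivative in every $M_1$-direction, so it depends only on $p_2$ and equals $f_2(p_2)$; connectedness of the two factors makes this global and yields $f(p_1,p_2)=f_1(p_1)+f_2(p_2)$. Each $f_i$ is an isometric immersion of $M_i$ because the product metric splits and $f$ is isometric.

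It then remains to identify the affine subspaces and the isometry $F$. The subspaces $V_1:=\span\,\{(f_1)_\ast X_1\}$ and $V_2:=\span\,\{(f_2)_\ast X_2\}$, spanned over all points, are orthogonal: the isometry property together with the product structure gives $\langle f_\ast X_1, f_\ast X_2\rangle = g(X_1,X_2)=0$, and since $f_\ast X_1$ depends only on $p_1$ while $f_\ast X_2$ depends only on $p_2$, every pair $(f_1)_\ast X_1$, $(f_2)_\ast X_2$ is orthogonal, whence $V_1\perp V_2$. Integrating first derivatives along paths in the connected factors shows $f_1(M_1)\subset N_1:=f_1(q_1)+V_1$ and $f_2(M_2)\subset N_2:=V_2$, affine subspaces with $V_1\perp V_2$. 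Finally $F(x_1,x_2):=x_1+x_2$, restricted to $N_1\times N_2$, is isometric precisely because $V_1\perp V_2$, and $f=F\circ(f_1\times f_2)$ by construction.

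The computation is short; the point to get right is the globalisation. The infinitesimal input, namely vanishing of the mixed Euclidean second derivative, is pointwise, and one must use connectedness of both factors together with the fact that a $\nabla^N$-parallel $\bb{R}^n$-valued section is genuinely constant in order to pass from ``the tangent data of each factor is unchanged along the other factor'' to honest fixed orthogonal affine subspaces carrying $f_1(M_1)$ and $f_2(M_2)$. This is also where flatness of $\bb{R}^n$ is essential: it is what turns parallel sections into constant vectors and makes both integrations path-independent, so that no curvature obstruction can appear.
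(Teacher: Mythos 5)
The paper gives no proof of this statement: it is recalled as Moore's theorem and used as a black box, with the proof deferred to the cited reference \cite{Moore}. Your argument is a correct, self-contained proof, and it is in substance Moore's original one: decomposability together with the product Levi-Civita connection kills the mixed Euclidean Hessian, so $f_*X_1$ is constant along $\{p_1\}\times M_2$; integration then gives the additive splitting $f(p_1,p_2)=f_1(p_1)+f_2(p_2)$; isometry of $f$ plus the slice-independence of $f_*X_1$ and $f_*X_2$ forces the spans $V_1$ and $V_2$ to be orthogonal, and $F(x_1,x_2)=x_1+x_2$ restricted to $N_1\times N_2$ is an isometric embedding precisely because $V_1\perp V_2$ (which also yields $V_1\cap V_2=\{0\}$, hence injectivity of $F$). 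You also place the connectedness hypotheses exactly where they are needed: once to promote the pointwise vanishing of $\nabla^N_{X_2}(f_*X_1)$ to constancy along slices, and once to integrate each $f_i$ into a fixed affine subspace $N_i$. The only cosmetic remark is that the relevant property of $\bb{R}^n$ is not flatness per se but that the canonical connection has trivial holonomy with parallel sections being constant vectors, which is what you actually use; your closing paragraph already says this in effect.
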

To present the generalisation of the above result to standard space target we first recall that a submanifold 
$N$ in $M^n(c)$ is called spherical if its second fundamental form $\alpha=g \otimes \z$ for some parallel normal vector field, where $g$ is the induced metric on $N$. For $c \neq 0$ this is equivalent with $N$ being the intersection of the standard space and some affine subspace of $\bb{R}^{n+1}$.  
\begin{defi} \label{def-1}
 An isometric immersion $f:M_1 \times M_2 \to M^n(c)$ of a Riemannian product is called a product immersion if and only if 
$f=F \circ (f_1 \times f_2)$ for isometric immersions $f_i:M_i \to N_i, i=1,2$ where:
\begin{itemize}
\item[(i)] $N_1,N_2$ are isometric to standard spaces and admit isometric embeddings $\varphi_i:N_i \to M^n(c), i=1,2$ with spherical 
image such that  $\varphi_1(N_1) \cap \varphi_2(N_2)=\{\overline{p}\}$;
\item[(ii)] the map $F:N_1 \times N_2 \to \bb{R}^{n+1},\ (p_1,p_2) \mapsto \overline{p}+
(\varphi_1(p_1)-\overline{p})+(\varphi_2(p_2)-\overline{p})$ is an
isometric embedding with image contained in $M^n(c)$. 
\end{itemize}
\end{defi}

The curved counterpart of Moore's decomposition criterion in Theorem \ref{M1} is
\begin{teo} \label{M2}\cite{Mol} {\rm (see also \cite{Noelker})}
 An isometric immersion $f:M_1 \times M_2 \to M^n(c)$ where $M_1,M_2$ are connected and $f$ has decomposable second fundamental form is a product immersion.
\end{teo}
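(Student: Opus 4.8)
The plan is to reduce the curved statement to the Euclidean one (Theorem~\ref{M1}) by composing $f$ with the standard umbilical embedding of the target form into a flat ambient space. For $c=0$ nothing is needed beyond Theorem~\ref{M1}, so assume $c\neq 0$ and realise $M^n(c)$ through its standard model $\iota:M^n(c)\hookrightarrow\bb{R}^{n+1}$, where $\bb{R}^{n+1}$ carries the Euclidean metric when $c>0$ and the Lorentzian metric when $c<0$. The embedding $\iota$ is totally umbilical, with second fundamental form $\alpha^\iota(U,V)=-c\,\langle U,V\rangle\,\nu$, where $\nu$ is the position vector field, normal to $M^n(c)$ and satisfying $\langle\nu,\nu\rangle=1/c$.

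First I would check that the composite $\tilde f=\iota\circ f:M_1\times M_2\to\bb{R}^{n+1}$ still has decomposable second fundamental form. By the composition formula its second fundamental form is $\tilde\alpha(X,Y)=\alpha(X,Y)+\alpha^\iota(df\,X,df\,Y)$. Taking $X\in\D_1$ and $Y\in\D_2$, the first term vanishes by hypothesis and the second equals $-c\,\langle df\,X,df\,Y\rangle\,\nu=0$, since $f$ is isometric and $TM_1\perp TM_2$ in the Riemannian product. Hence $\tilde\alpha(\D_1,\D_2)=0$, and Moore's criterion (Theorem~\ref{M1}, in its pseudo-Euclidean form when $c<0$) applies: it produces a base point $\bar p=\tilde f(o_1,o_2)$ and mutually orthogonal linear subspaces $V_1,V_2\sset\bb{R}^{n+1}$ with $\tilde f(x_1,x_2)=\bar p+\xi_1(x_1)+\xi_2(x_2)$, where $\xi_i(x_i)\in V_i$ and $\xi_i(o_i)=0$; the factor $\tilde f_i:=\bar p+\xi_i$ immerses $M_i$ into the affine subspace $\bar p+V_i$.

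Finally I would repackage this additive structure into the form required by Definition~\ref{def-1}. Setting $x_2=o_2$ gives $\tilde f_1(M_1)\sset M^n(c)\cap(\bar p+V_1)$, and symmetrically for the second factor. I would then take $N_i:=M^n(c)\cap(\bar p+V_i)$ with $\varphi_i$ its inclusion; this image is spherical by the criterion recalled just before Definition~\ref{def-1}, $N_i$ is isometric to a standard space (the relevant metric being Riemannian since $f_i$ is isometric), and $f_i$ is $\tilde f_i$ viewed as a map into $N_i$. Because $V_1\cap V_2=\{0\}$ by orthogonality, $\varphi_1(N_1)\cap\varphi_2(N_2)=M^n(c)\cap\{\bar p\}=\{\bar p\}$, which is condition~(i). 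For condition~(ii), writing $u_i=\varphi_i(p_i)-\bar p\in V_i$, the relation $\langle\varphi_i(p_i),\varphi_i(p_i)\rangle=1/c=\langle\bar p,\bar p\rangle$ forces $2\langle\bar p,u_i\rangle+\langle u_i,u_i\rangle=0$; a short computation with $u_1\perp u_2$ then shows that $F(p_1,p_2)=\bar p+u_1+u_2$ satisfies $\langle F,F\rangle=1/c$, so $F$ maps $N_1\times N_2$ into $M^n(c)$, is isometric, and reproduces $\tilde f=F\circ(f_1\times f_2)$.

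The main obstacle I anticipate is twofold: securing Moore's decomposition theorem in the Lorentzian ambient space needed for the hyperbolic target, since the statement of Theorem~\ref{M1} is Euclidean; and carrying out the bookkeeping that $F$ is a genuine isometric embedding into $M^n(c)$ with the two spherical factors meeting only at $\bar p$. Both are harmless in the Riemannian case and are precisely what the pseudo-Euclidean version developed in \cite{Mol,Noelker} supplies.
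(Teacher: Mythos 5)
First, a point of comparison: the paper does not prove Theorem~\ref{M2} at all --- it is imported verbatim from Molzan's dissertation \cite{Mol} and N\"olker \cite{Noelker}, so there is no internal proof to measure you against. Your proposal reconstructs what is in fact the standard route of those sources: reduce to the flat ambient case via the umbilical embedding $\iota:M^n(c)\hookrightarrow\bb{R}^{n+1}$. The core computations you give are correct: $\alpha^\iota(U,V)=-c\langle U,V\rangle\nu$ with $\langle\nu,\nu\rangle=1/c$, the composition formula showing $\tilde\alpha(\D_1,\D_2)=0$, and the verification that $F(p_1,p_2)=\bar p+u_1+u_2$ satisfies $\langle F,F\rangle=1/c$ (using $2\langle\bar p,u_i\rangle+\langle u_i,u_i\rangle=0$ and $u_1\perp u_2$) so that $F$ lands in $M^n(c)$ and reproduces $\tilde f=F\circ(f_1\times f_2)$. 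For $c>0$ this, together with Moore's Euclidean theorem (Theorem~\ref{M1}), genuinely yields the statement in the sense of Definition~\ref{def-1}, modulo a routine transversality check that $N_i=M^n(c)\cap(\bar p+V_i)$ is a smooth spherical submanifold.

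The one genuine gap is the case $c<0$, and you have correctly identified but not closed it. Theorem~\ref{M1} as stated is Euclidean; in the Lorentzian model of hyperbolic space the argument breaks at two specific points. First, Moore's proof must be redone in indefinite signature, where the subspaces $V_i$ spanned by the chords $\tilde f(\cdot,o_2)-\bar p$ may a priori be degenerate. Second, your step ``$V_1\cap V_2=\{0\}$ by orthogonality'' fails in Lorentzian signature without a non-degeneracy argument: a vector in $V_1\cap V_2$ is only forced to be null, not zero. Deferring exactly these points to ``the pseudo-Euclidean version developed in \cite{Mol,Noelker}'' is honest, but it means the hardest part of the hyperbolic case is being cited from the very works whose theorem you are proving; as a self-contained proof the proposal is therefore complete only for $c\geq 0$, and for $c<0$ it is a correct reduction scheme conditional on the pseudo-Euclidean splitting theorem.
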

Whereas Definition \ref{def-0} is a particular case of Definition \ref{def-1}, the latter allows a broader treatment of the case when $c=0$ e.g. when $N_1, N_2$ are taken to be spheres.

An isometric immersion $f:M \to N$ will be called a local product if for each $x$ in $M$ there exists a product open 
set $M_1 \times M_2$ around $x$ such that $f_{\vert M_1 \times M_2}$ is a product immersion.

The main result in this paper can now be formulated as follows. 
\begin{teo} \label{main1}Let $(M,g) \rightarrow M^n(c)$ be a $k$-parallel isometric immersion. It is the local product 
of a parallel immersion 
and a $k$-parallel immersion of a flat space 
with flat normal bundle. When $(M,g)$ is simply connected and complete the splitting is global.
\end{teo}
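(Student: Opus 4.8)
The plan is to reduce first to the case where $(M,g)$ is intrinsically locally symmetric and then, by refining its de~Rham decomposition, to produce a product structure whose factors have the asserted type, so that Theorem~\ref{M2} applies. For the reduction I would use the Gauss equation, which expresses $R$ as the constant-curvature term plus an expression quadratic in $\alpha$ with a parallel contraction pairing. Differentiating it $2k-1$ times by Leibniz, every term of $\nabla^{2k-1}R$ carries a factor $\nabla^a\alpha$ with $a\geq k$, since $a+b=2k-1$ forces $\max(a,b)\geq k$; each such factor vanishes under the hypothesis $\nabla^k\alpha=0$. Thus $\nabla^{2k-1}R=0$, and by \cite{K-N} the manifold is locally symmetric, $\nabla R=0$. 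Taking $T_0$ to be the nullity distribution of $R$ and $T_1=T_0^{\perp}$, both are parallel and yield, locally and, when $M$ is simply connected and complete, globally, an isometric splitting $M=M_0\times M_1$ with $M_0$ flat and $M_1$ of non-degenerate curvature, i.e. with no local Euclidean factor.

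The core of the argument is to locate the non-parallel part of $\alpha$ and to confine it to a flat, normally flat factor. Because the target is a space form, the Codazzi equation makes $\nabla\alpha$ totally symmetric in its tangential arguments, and inductively $\nabla^{j}\alpha$ is symmetric modulo the now-parallel curvature. I would introduce the distribution $\mathcal{W}$ spanned by those tangential directions that occur non-trivially in $\nabla\alpha,\dots,\nabla^{k-1}\alpha$ --- the non-parallel directions --- together with $\mathcal{V}=\mathcal{W}^{\perp}$, and prove three things. First, $\mathcal{W}$ and $\mathcal{V}$ are parallel; here one starts from the top non-vanishing derivative $\nabla^{k-1}\alpha$, which is parallel since $\nabla^k\alpha=0$, and descends. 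Second, $\mathcal{W}\subseteq T_0$, so the leaves of $\mathcal{W}$ are flat; heuristically a direction carrying non-parallelism cannot feed the non-degenerate curvature of $M_1$, the precise statement coming from differentiating the Gauss equation and using $\nabla R=0$. Third, the osculating spaces generated over $\mathcal{W}$ and over $\mathcal{V}$ are orthogonal and parallel subbundles of $NM$, which forces the decomposability $\alpha(\mathcal{W},\mathcal{V})=0$ and the flatness of the normal bundle along the leaves of $\mathcal{W}$. This is exactly the promised modification of the de~Rham splitting: one regroups the flat directions of $T_0$ that carry no non-parallelism together with $T_1$, so that the refined splitting $\mathcal{W}\oplus\mathcal{V}$, rather than the naive $T_0\oplus T_1$, renders $\alpha$ decomposable.

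With decomposability in hand, Theorem~\ref{M2} presents $f$ as a product immersion $f=F\circ(f_0\times f_1)$ in the sense of Definition~\ref{def-1}, where $f_0$ and $f_1$ are the immersions of the integral leaves of $\mathcal{W}$ and $\mathcal{V}$. By construction every $\nabla^{j}\alpha$ with $j\geq 1$ vanishes as soon as a $\mathcal{V}$-direction enters, so $f_1$ is parallel; meanwhile $f_0$ is $k$-parallel (inherited from $M$), its domain is flat, and its normal bundle is flat, as required, and the local versus global dichotomy merely mirrors that of the de~Rham splitting. I expect the second and third points above --- that $\mathcal{W}$ is simultaneously parallel, flat, and the carrier of a decomposable $\alpha$ with flat normal bundle --- to be the main obstacle: the intrinsic de~Rham factors need not by themselves give a product immersion, and the whole difficulty lies in reconciling this intrinsic splitting with the algebraic structure of the higher covariant derivatives of $\alpha$ recorded in the normal bundle.
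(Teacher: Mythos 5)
Your first step (differentiating the Gauss equation $2k-1$ times and invoking \cite{K-N} to get $\nabla R=0$, then splitting off the nullity of $R$) is exactly the paper's Proposition \ref{s1} and works for space-form targets. The fatal problem is your third claim: with $\mathcal{W}$ defined as the span of the directions occurring in $\nabla\alpha,\dots,\nabla^{k-1}\alpha$ and $\mathcal{V}=\mathcal{W}^{\perp}$, it is simply not true that $\alpha(\mathcal{W},\mathcal{V})=0$, nor that the osculating spaces over $\mathcal{W}$ and $\mathcal{V}$ are orthogonal. Concretely: on flat $\bb{R}^2$ with coordinates $(u,v)$, take a rank-two flat normal bundle with parallel orthonormal frame $\{n_1,n_2\}$ and set
\begin{equation*}
\alpha(\partial_u,\partial_u)=u\,n_1+n_2,\qquad \alpha(\partial_u,\partial_v)=n_1,\qquad \alpha(\partial_v,\partial_v)=n_2 .
\end{equation*}
The Gauss equation for a flat surface holds ($\langle\alpha_{11},\alpha_{22}\rangle-|\alpha_{12}|^2=1-1=0$), the shape operators $A_{n_1}=\left(\begin{smallmatrix} u&1\\ 1&0\end{smallmatrix}\right)$ and $A_{n_2}=\mathrm{id}$ commute (Ricci equation with $R^{\perp}=0$), and $\nabla\alpha$ has the single component $(\nabla_{\partial_u}\alpha)(\partial_u,\partial_u)=n_1$, which is totally symmetric (Codazzi) and parallel, so $\nabla^2\alpha=0$ while $\nabla\alpha\neq0$. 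By the fundamental theorem of submanifolds these data are realised by a flat, normally flat, $2$-parallel surface in $\bb{R}^4$. For this surface $\mathcal{W}=\bb{R}\partial_u$ and $\mathcal{V}=\bb{R}\partial_v$, yet $\alpha(\partial_u,\partial_v)=n_1\neq 0$; moreover no constant rotation of the frame $(\partial_u,\partial_v)$ makes $\alpha$ decomposable, so this surface admits no non-trivial product splitting at all, whereas your argument would split off the $\partial_v$-line as a parallel factor.

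The missing idea is the one encoded in the paper's Propositions \ref{las} and \ref{last}: the flat factor must be taken strictly larger than the span of the ``non-parallel directions''. The paper first forms $F_0\subseteq NM$, the span of all values of $\nabla^l\alpha$, $1\le l\le k-1$, and then defines the flat factor $E_0''$ as the span of $A_{\xi}X$ \emph{and} $(\nabla^l A)_{\xi}X$ for $\xi\in F_0$ --- the crucial extra ingredient being the order-zero terms $\im A_{F_0}$, i.e.\ all tangent directions coupled by $\alpha$ itself to the normal directions generated by the derivatives of $\alpha$. It is precisely this enlargement that gives $A_{F_0}E_0'=0$ (Proposition \ref{las}\,(ii)) and hence $\langle\alpha(E_0',E_0''),F_0\rangle=0$, the step that completes decomposability in Proposition \ref{last}; in the example above it yields $E_0''=TM$ (since $A_{n_1}$ is invertible), i.e.\ the correct conclusion that the parallel factor is trivial. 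A second, lesser gap: your claim $\mathcal{W}\subseteq T_0$ amounts to $\nabla_X\alpha=0$ for all $X$ in $E_1$, which the paper does not obtain by ``differentiating the Gauss equation'' but by a genuinely different argument (Proposition \ref{s1}\,(iii)): Tanno's theorem applied, on the integral leaves of $E_1$, to the projection of $\grad|\alpha|^2$, using $\nabla^{2k-2}\grad|\alpha|^2=0$. You would need to reproduce that argument as well.
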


The proof requires a few technical steps we will outline below.  When not specified otherwise we will work under the assumptions of Theorem \ref{main1}.

We define at each point of $M$
$$  E_0=\{X \in TM: R(TM,TM)X=0 \}, \  E_1=E_0^{\perp} $$
and note that $E_1=\span \{ R(X,Y)Z : X,Y,Z \in TM\} $.
\begin{pro}\label{s1}
Let $(M,g) \hookrightarrow (N,h)$ be a $k$-parallel isometric immersion, where $(N,h)$ is locally symmetric. The following hold:
\begin{itemize}
\item[(i)] $E_0$ and $E_1$ are parallel in $TM$; 
\item[(ii)] the distribution $E_0$ is flat; 
\item[(iii)] $E_1 \subset \{X \in TM: \nabla_X \alpha=0\}.$
\end{itemize}
\end{pro}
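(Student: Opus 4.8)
The plan is to derive all three statements from the single structural fact that $(M,g)$ is locally symmetric, which I would establish first. Writing the Gauss equation schematically as $R=\overline{R}+\alpha\ast\alpha$, where $\overline{R}$ is the restriction along $M$ of the ambient curvature $R^N$ and $\alpha\ast\alpha$ is the quadratic Gauss term, I would differentiate. Since $(N,h)$ is locally symmetric we have $\nabla^N R^N=0$, and for the standard-space targets under consideration (the assumptions of Theorem~\ref{main1}) the ambient curvature has vanishing mixed tangential--normal components, so $\overline{R}$ is parallel along $M$ and $\nabla^j R=\nabla^j(\alpha\ast\alpha)$ for every $j\geq 1$. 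Expanding $\nabla^{2k-1}(\alpha\ast\alpha)$ by the Leibniz rule gives a sum of terms $\nabla^a\alpha\ast\nabla^b\alpha$ with $a+b=2k-1$; since then $\max(a,b)\geq k$, each term carries a factor $\nabla^{\geq k}\alpha=0$ by \rf{kpe}. Hence $\nabla^{2k-1}R=0$, and the curvature being parallel at order $2k-1$ forces $\nabla R=0$ by \cite{K-N}.

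For (i), recall that by the pair symmetry of $R$ the bundle $E_0=\{X:R(TM,TM)X=0\}$ coincides with the nullity distribution $\{X:R(X,\cdot)=0\}$, and $E_1=E_0^{\perp}=\span\{R(Y,Z)U\}$ is its image. Given $\nabla R=0$ and $X\in E_0$, the identity $0=(\nabla_V R)(Y,Z)X$ reduces, after discarding the terms containing $R(\cdot,\cdot)X=0$, to $R(Y,Z)(\nabla_V X)=0$ for all $Y,Z$; thus $\nabla_V X\in E_0$, so $E_0$ is parallel and hence so is $E_1$. For (ii), a parallel distribution is integrable with totally geodesic leaves, so the intrinsic curvature of a leaf of $E_0$ equals the restriction of $R$ to $E_0$; but $R$ vanishes identically on $E_0$ by definition, so the leaves, and therefore the distribution $E_0$, are flat.

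Claim (iii) is the delicate one and is where I expect the main obstacle to lie. By \rf{kpe} the tensor $\nabla^{k-1}\alpha$ is parallel, so the Ricci identity yields $R(A,B)\cdot\nabla^{k-1}\alpha=0$ for all $A,B$, the curvature acting on each tangential slot through $R$ and on the normal slot through $R^\perp$; the same reasoning gives $R(A,B)\cdot\nabla^{k-2}\alpha=0$. Because the Codazzi equation for a standard-space target makes $\nabla\alpha$ (and the differentiation entries of each $\nabla^j\alpha$) totally symmetric, proving (iii) is equivalent to showing that $\nabla\alpha$ annihilates $E_1$ in every entry, i.e.\ that it is a section of $S^3 E_0^{\ast}\otimes NM$. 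My strategy would be a downward induction on the order of differentiation: since $E_1=\span\{R(Y,Z)U\}$ is exactly the image of the curvature, an $E_1$-directional derivative of $\nabla^{j}\alpha$ appears precisely in the curvature action $R(A,B)\cdot\nabla^{j}\alpha$, which the Ricci identity equates to the antisymmetrised part of $\nabla^{j+2}\alpha$; one propagates the vanishing established at orders $k-1$ and $k-2$ downward to order one. The hard part will be closing this descent, because these Ricci identities \emph{relate} rather than immediately annihilate the $E_1$-directional derivatives; making them force vanishing requires exploiting the non-degeneracy of $R$ on $E_1$ (equivalently, the symmetric-space structure on the $E_1$-factor of the de Rham splitting $M=M_0\times M_1$) to invert the curvature action and conclude $\nabla_X\alpha=0$ for every $X\in E_1$.
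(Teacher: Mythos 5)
Your preliminary reduction and parts (i)--(ii) are essentially correct, with one caveat: you establish $\nabla R=0$ only for space-form targets, by assuming the mixed tangential--normal components of $R^N$ vanish and that the restricted ambient curvature is parallel along $M$. That is false for a general locally symmetric $(N,h)$, which is the hypothesis the proposition actually carries. The paper's proof handles the general case: differentiating the Gauss equation \eqref{g1} tangentially produces terms $R^N(X_1,\ldots,\alpha_{X_0}X_i,\ldots,X_4)$, which are rewritten via the Codazzi--Mainardi equation \eqref{c-m} as expressions in $\nabla\alpha$; iterating, every term of $\nabla^{2k-1}R$ eventually carries a factor $\nabla^{\geq k}\alpha=0$, and \cite{K-N} then gives $\nabla R=0$. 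So your Leibniz-rule computation is the right idea but needs the Codazzi step to reach the stated generality.

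The genuine gap is in (iii), and you flag it yourself: your downward induction never closes. The Ricci identities you invoke only say that antisymmetrizing $\nabla^{j+2}\alpha$ in two derivative slots yields $R(A,B)\cdot\nabla^{j}\alpha$; they never isolate an $E_1$-directional derivative of $\alpha$, and the ``inversion of the curvature action'' is precisely the content you would have to supply. The paper proves (iii) by a different device altogether: for $t=|\alpha|^2$, the Hessian identity \eqref{g3} together with $\nabla^k\alpha=0$ gives $\nabla^{2k-2}\grad t=0$; letting $X_1$ denote the $E_1$-component of $\grad t$ (also parallel at order $2k-2$, since $E_1$ is parallel), Tanno's theorem \cite{Tanno} --- a tensor parallel at some higher order is parallel, provided $R(X,Y)$ is nonsingular for some $X,Y$ at some point --- is applied to the totally geodesic leaves of $E_1$ to get $\nabla_U X_1=0$ for $U\in E_1$, hence $R(E_1,E_1)X_1=0$; combined with $R(E_0,TM)=0$ this forces $R(TM,TM)X_1=0$, i.e.\ $X_1\in E_0\cap E_1=0$. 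Thus $\grad t\in E_0$, which is parallel, so for $X,Y\in E_1$ the left-hand side of \eqref{g3} vanishes, and taking $Y=X$ gives $\nabla_X\alpha=0$ by positivity. Note that Tanno's theorem is exactly the curvature-nondegeneracy input you anticipated, but it is applied to the auxiliary field $X_1$ rather than to the tensors $\nabla^j\alpha$; without this step (or a substitute for it) your descent does not prove (iii).
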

\begin{proof}
(i) We differentiate the Gauss equation 
\begin{equation} \label{g1}
R^N(X_1,X_2,X_3,X_4)=R(X_1,X_2,X_3,X_4)+\langle \alpha_{X_2}X_3, \alpha_{X_1}X_4\rangle-\langle \alpha_{X_1}X_3, \alpha_{X_2}X_4\rangle
\end{equation}
where $X_i, 1 \le i \le 4$, belong to $TM$ to find that $$(\nabla^{2k-1}R)(X_1,X_2,X_3,X_4)=(\nabla^{2k-1}R^N)(X_1,X_2,X_3,X_4)$$ in directions tangent to $M$. Now, taking into account that $\nabla^NR^N=0$ yields 
$$ -(\nabla_{X_0}R^N)(X_1,X_2,X_3,X_4)=\sum \limits_{i=1}^4 R^N(X_1, \ldots, \alpha_{X_0}X_i, \ldots, X_4)
$$
for all $X_0$ in $TM$. Now the Codazzi-Mainardi formula 
\begin{equation}\label{c-m}
-R^N(X_1,X_2,X_3,\xi)=\langle (\nabla_{X_1}\alpha)_{X_2}X_3-(\nabla_{X_2}\alpha)_{X_1}X_3,\xi \rangle 
\end{equation}
for $\xi\in NM$ makes that, for instance, 
$$ -R^N(X_1,X_2,X_3,\alpha_{X_0}X_4)=\langle (\nabla _{X_1}\alpha)_{X_2}X_3- (\nabla _{X_2}\alpha)_{X_1}X_3, \alpha_{X_0}X_4\rangle
$$
hence clearly $\nabla^{2k-1}R^N=0$ and then $\nabla^{2k-1}R=0$. By \cite{K-N} we have that $\nabla R=0$ 
whence the claim.\\
(ii) follows from the definition of $E_0$.\\
(iii) We exploit an argument used in \cite{K-N, Tanno} under slightly different assumptions. The function $t=\vert \alpha \vert^2:M\rightarrow \R$ satisfies  
\begin{equation} \label{g3}
\langle \nabla_{X}\grad t, Y \rangle=2\langle \nabla_X \alpha, \nabla_Y \alpha \rangle
\end{equation}
for all $X,Y$ in $TM$; it follows that $\nabla^{2k-2} \grad t=0$ along $M$, after differentiating at higher order. In particular $\nabla^{2k-2}X_1=0$
where $X_1$ denotes the orthogonal projection of $\grad t$ onto the parallel distribution $E_1$. Now we can use the following  result due to Tanno.  Suppose that at some point $x$ of a Riemannian manifold and for some tangent vectors $X,Y$ at $x$, $R(X,Y)$ is not singular. Then having $\nabla^k T=0$ for an arbitrary tensor $T$ and for some $k\ge 1$ implies $\nabla T=0$,  see \cite{Tanno}, Theorem 1. Applied to the integral manifolds of $E_1$ this
yields $\nabla_UX_1=0$ for all $U$ in $E_1$; in particular $R(E_1,E_1)X_1=0$. Since, moreover, $R(E_0,TM)=0$
we get $R(TM,TM)X_1=0$, thus $X_1\in E_0$ showing that $X_1=0$. In other words, $\grad t$ belongs to $E_0$, 
which is parallel in $TM$. When taking $X,Y$ in $E_1$ it follows
that the right hand side of \eqref{g3} vanishes and the claim follows by a positivity argument.
\end{proof}
Under additional assumptions we have the following alternative, by using essentially that 
a Ricci flat, locally symmetric space is flat.
\begin{coro} \label{e}

If we have a $k$-parallel isometric immersion of an Einstein or locally irreducible manifold $M$ into a locally symmetric space, then either the immersion is parallel or $M$ is flat.
\end{coro}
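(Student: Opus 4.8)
The plan is to run everything off the parallel orthogonal splitting $TM=E_0\oplus E_1$ provided by Proposition \ref{s1}(i), remembering that the proof of that proposition also establishes $\nabla R=0$, so that $M$ is itself locally symmetric. In each of the two alternatives I aim to show that one of $E_0,E_1$ exhausts $TM$: if $E_0=0$ then $E_1=TM$ and part (iii) yields $\nabla_X\alpha=0$ for all $X$, i.e. the immersion is parallel; while if $E_0=TM$ then $R(TM,TM)=0$ by the very definition of $E_0$, i.e. $M$ is flat. It therefore suffices, under each hypothesis, to exclude the mixed situation $0\neq E_0\neq TM$.

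The locally irreducible case is immediate: $E_0$ and $E_1$ are parallel distributions by (i), and a locally irreducible manifold carries no non-trivial parallel distribution, so $E_0\in\{0,TM\}$ at once, giving parallelism or flatness.

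For the Einstein case I would pass to the Ricci tensor, the point being that $E_0$ lies in the kernel of $\Ric$. Indeed, for $X\in E_0$ the defining relation $R(TM,TM)X=0$ reads $R(Y,Z,X,W)=0$ for all $Y,Z,W$, and the pair symmetry $R(A,B,C,D)=R(C,D,A,B)$ then gives $R(X,W,Y,Z)=0$, that is $R(X,\cdot)=0$ as a curvature operator; contracting, $\Ric(X,\cdot)=0$. Since $E_0$ is parallel it has constant rank on the connected manifold $M$, so if $E_0\neq0$ then at each point there is a non-zero $X\in E_0$, and $\Ric=\lambda g$ forces $\lambda=0$ there, hence $\lambda\equiv0$. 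Thus $M$ is Ricci flat, and being locally symmetric it is flat, a Ricci-flat locally symmetric space having no non-flat de Rham factor. This rules out the mixed case and closes the Einstein alternative.

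I expect the only point needing care to be the curvature bookkeeping that places $E_0$ in the kernel of $\Ric$; once this is secured, each hypothesis collapses the splitting in the desired way. The two external inputs are the local symmetry $\nabla R=0$ coming from the proof of Proposition \ref{s1} and the classical fact that a Ricci-flat locally symmetric space is flat.
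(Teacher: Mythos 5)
Your proof is correct and is essentially the argument the paper intends: the paper gives no written proof beyond the remark that the corollary follows ``by using essentially that a Ricci flat, locally symmetric space is flat.'' Your fleshed-out version---collapsing the parallel splitting $TM=E_0\oplus E_1$ of Proposition \ref{s1} via local irreducibility in one case, and via $E_0\subseteq\Ker\Ric$ together with the quoted fact about Ricci-flat locally symmetric spaces in the Einstein case---is exactly that route.
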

In what follows the action of the curvature tensor on a tensor field $Q$ in $(T^{\star}M)^l \otimes NM$ is given by 
\begin{equation*}
 (R(X,Y)\cdot Q)(Z_1, \ldots, Z_l)=R^{\perp}(X,Y)Q(Z_1, \ldots, Z_l)-\sum \limits_{i=1}^{l}Q(Z_1, \ldots, R(X,Y)Z_i, \ldots, X_l)
\end{equation*}
whenever $X,Y, Z_i, 1 \le i \le l$ belong to $TM$. For $s \geq 0$ we will use frequently the short 
hand notation 
$\im \nabla^s \alpha$ to refer to $\span \{\nabla^s_{X_1, \ldots, X_s}\alpha \} \subseteq NM$
where $X_i, 1 \le i \leq s$ belong to $TM$.

\begin{lema} \label{ll1} Let $(M,g)$ be a $k$-parallel isometric immersion into a space form. We have 
\begin{itemize}
\item[(i)] $R^{\perp}(E_0,TM)(\im \nabla^{k-s} \alpha)=0, s \geq 0;$
\item[(ii)] $R^{\perp}(E_0,TM)=0.$
\end{itemize}
\end{lema}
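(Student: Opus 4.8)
The plan is to prove (i) by a downward induction on the order of differentiation, and to deduce (ii) from the case $s=k$ (i.e. $m=0$) of (i) together with the Ricci equation. Write $T_m=\nabla^m\alpha$ and fix $X\in E_0$, $Y\in TM$. Since $X\in E_0$ means $R(Z,W)X=0$ for all $Z,W$, the pair symmetry of $R$ forces $R(X,\cdot)=0$ on $TM$; hence in the curvature action every tangential term drops out and
\[
R^{\perp}(X,Y)\big(T_m(X_1,\ldots,X_m,V,W)\big)=\big(R(X,Y)\cdot T_m\big)(X_1,\ldots,X_m,V,W).
\]
By the Ricci identity the right-hand side equals $\big(\nabla^{m+2}_{X,Y,X_1,\ldots,X_m}\alpha-\nabla^{m+2}_{Y,X,X_1,\ldots,X_m}\alpha\big)(V,W)$. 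When $m\geq k-2$ we have $m+2\geq k$, so $\nabla^{m+2}\alpha=0$ and this vanishes; this disposes of $\im\nabla^{m}\alpha$ for all $s=k-m\leq 2$, including the trivial top case $\nabla^k\alpha=0$.

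For $m<k-2$ the terms $\nabla^{m+2}\alpha$ no longer vanish and one must propagate downwards. Here I would use that the ambient space form is locally symmetric, so $\nabla R=0$ (as established in Proposition~\ref{s1}); the Leibniz rule then gives the recursion $\nabla(R\cdot T_m)=R\cdot T_{m+1}$. Because $\nabla^{k}\alpha=0$, the top derivative $T_{k-1}$ is parallel, whence $R\cdot T_{k-1}=0$. Restricting the free curvature entry to the parallel distribution $E_0$ — legitimate since $E_0$ is parallel, so the restriction commutes with $\nabla$ — I set $\Phi_m:=(R(E_0,\cdot)\cdot T_m)$ and obtain $\nabla\Phi_m=\Phi_{m+1}$ with $\Phi_{k-1}=0$. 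Note that $\Phi_m=0$ is exactly the assertion $R^{\perp}(E_0,TM)(\im\nabla^m\alpha)=0$. Running the induction downwards, the hypothesis $\Phi_{m+1}=0$ gives $\nabla\Phi_m=0$, i.e. $\Phi_m$ is parallel.

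The hard part is to upgrade ``$\Phi_m$ parallel'' to ``$\Phi_m=0$'': a parallel tensor need not vanish, so genuine input from the flatness of $E_0$ is required, and I expect this to be the main obstacle, since it is where the flat and the non-degenerate parts of $M$ must be played off against each other. I would argue by a positivity computation in the spirit of Proposition~\ref{s1}(iii). For $m\geq 1$ one has $\Phi_m=\nabla\Phi_{m-1}$, so $\nabla^2\Phi_{m-1}=\nabla\Phi_m=0$ and the Hessian of $f=\vert\Phi_{m-1}\vert^2$ collapses to $\langle\nabla_U\grad f,V\rangle=2\langle\Phi_m(U,\ldots),\Phi_m(V,\ldots)\rangle$. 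As in Proposition~\ref{s1}(iii), Tanno's theorem applied on the integral leaves of $E_1$, where $R$ is non-singular, shows $\grad f\in E_0$, and since $E_0$ is parallel the pairing $\langle\nabla_U\grad f,U\rangle=0$ for $U\in E_1$ then kills the $E_1$-component of $\Phi_m$; combined with parallelism and with $R^{\perp}(E_0,E_0)=0$ coming from the Gauss equation on the flat leaves of $E_0$, this yields $\Phi_m=0$. The base level $m=0$, where no $\Phi_{-1}$ is available, is handled directly from $R^{\perp}(E_0,E_0)=0$ together with $\Phi_1=0$.

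Finally (ii) follows from the case $m=0$ of (i). By the Ricci equation of a space form, $\langle R^{\perp}(X,Y)\xi,\eta\rangle=\langle[A_\xi,A_\eta]X,Y\rangle$, where $A$ denotes the shape operator; this vanishes whenever $\xi$ or $\eta$ is orthogonal to $\im\alpha$, since then the corresponding shape operator is zero. Writing any normal vector as $\xi=\xi_1+\xi_2$ with $\xi_1\in\im\alpha=\im\nabla^0\alpha$ and $\xi_2\perp\im\alpha$, part (i) with $m=0$ gives $R^{\perp}(E_0,TM)\xi_1=0$, while the Ricci equation gives $R^{\perp}(E_0,TM)\xi_2=0$; hence $R^{\perp}(E_0,TM)=0$.
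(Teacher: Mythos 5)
Your opening reduction (the cases $s\le 2$, via the Ricci identity and the fact that $X\in E_0$ forces $R(X,\cdot)=0$) and your deduction of (ii) from the case $m=0$ via the Ricci equation \eqref{curv2} are both correct and coincide with the paper. The core of your argument, however --- the downward induction for $m<k-2$ --- has two genuine gaps. First, the recursion $\nabla\Phi_m=\Phi_{m+1}$ is unjustified and in fact circular. Since $\Phi_m$ takes values in the normal bundle and, for $X\in E_0$, $\Phi_m(X,Y,\dots)=R^{\perp}(X,Y)\bigl(\nabla^m\alpha(\dots)\bigr)$, differentiating it by the Leibniz rule produces terms in $(\nabla_U R^{\perp})(X,Y)$, not only in $\nabla R$. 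Proposition \ref{s1} gives $\nabla R=0$ for the tangential curvature, but $\nabla R^{\perp}=0$ is precisely Proposition \ref{ll2}\,(ii) of the paper, which is proved \emph{after} Lemma \ref{ll1} and \emph{uses} it, so it cannot be an ingredient here. Rewriting $\Phi_m$ via the Ricci identity as the antisymmetrisation of $\nabla^{m+2}\alpha$ in its first two slots does not repair this: then $\nabla\Phi_m$ antisymmetrises $\nabla^{m+3}\alpha$ in its second and third slots, and commuting slots to compare it with $\Phi_{m+1}$ reintroduces curvature-action corrections of the form $R(U,\cdot)\cdot\nabla^{m+1}\alpha$ with arbitrary first entry $U$, which the inductive hypothesis (first entry in $E_0$) does not control. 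Second, the claim that $R^{\perp}(E_0,E_0)=0$ ``comes from the Gauss equation on the flat leaves of $E_0$'' is false: the Gauss equation constrains only the tangential curvature, the normal curvature is governed by the Ricci equation \eqref{curv2}, and a flat submanifold may perfectly well have non-flat normal bundle. In fact $R^{\perp}(E_0,E_0)=0$ is part of assertion (ii) of the very lemma you are proving, so it cannot serve as an input; both your induction step and your base case $m=0$ lean on it.

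The paper's proof avoids both problems because it never upgrades ``parallel'' to ``zero'' (so no Tanno-type argument and no derivative of $R^{\perp}$ is ever taken). The mechanism is: for $X\in E_0$, $Y\in TM$, the Ricci identity together with $R(X,\cdot)=0$ gives the \emph{containment} $R^{\perp}(X,Y)(\im \nabla^{m}\alpha)\subseteq\im \nabla^{m+2}\alpha$. Hence, once $R^{\perp}(X,Y)(\im \nabla^{m+2}\alpha)=0$ is known, one gets $R^{\perp}(X,Y)R^{\perp}(X,Y)\xi=0$ for every $\xi\in\im \nabla^{m}\alpha$, and skew-symmetry of $R^{\perp}(X,Y)$ yields $0=\langle R^{\perp}(X,Y)R^{\perp}(X,Y)\xi,\xi\rangle=-\vert R^{\perp}(X,Y)\xi\vert^2$, i.e. $R^{\perp}(X,Y)(\im \nabla^{m}\alpha)=0$. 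This descends from $\nabla^k\alpha=0$ in steps of two, and the parity gap is closed by the observation that a $k$-parallel immersion is also $(k+1)$-parallel. This positivity trick is exactly the ingredient you would need in order to replace both the unproved recursion and the false normal-flatness claim in your sketch.
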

\begin{proof}
Because $\nabla^2(\nabla^{k-2} \alpha)=0$ it follows after anti-symmetrisation that the curvature operator $R(X,Y)$ acts trivially on $\nabla^{k-2} \alpha$; since moreover $R(E_0,TM)TM=0$ we obtain that 
$$ R^{\perp}(E_0,TM) (\im \nabla^{k-2} \alpha)=0.
$$
Further anti-symmetrisation in the first two arguments in $\nabla^{k-2} \alpha=\nabla^2(\nabla^{k-4} \alpha)$ yields 
$$ R^{\perp}(E_0,TM) (R^{\perp}(E_0,TM) (\im \nabla^{k-4} \alpha))=0.
$$
After taking scalar products with vectors in $\im \nabla^{k-4} \alpha$ a positivity argument shows that 
$$R^{\perp}(E_0,TM) (\im \nabla^{k-4} \alpha)=0.$$ Continuing this procedure leads to  
$R^{\perp}(E_0,TM)  (\im \nabla^{k-2s} \alpha)=0, s \geq 0$. Since $k$-parallel manifolds are also $k+1$-parallel this equation holds also for $k+1$ and (i) follows.

In particular, (i) gives $R^{\perp}(E_0,TM)  (\im \alpha)=0$. We will now use the Ricci formula 
\begin{equation} \label{curv2}
\langle R^{\perp}(X,Y)\xi_1,\xi_2 \rangle=\langle [A_{\xi_1},A_{\xi_2}]X,Y \rangle
\end{equation}
for $X,Y$ in $TM$ and $\xi_1, \xi_2$ in $NM$, where $\langle A_{\xi}X,Y \rangle=-\langle \alpha_XY, \xi \rangle$ is the shape operator. It shows that also $(\im \alpha )^{\perp}=\{\xi \in NM: A_{\xi}=0\}$ is annihilated by $R^{\perp}(E_0,TM)$ and (ii) follows.
\end{proof}

\begin{pro} \label{ll2}
Let $(M,g)$ be a $k$-parallel isometric immersion into a space form:
\begin{itemize}
 \item[(i)] we have that $(M,g)$ is semi-parallel, that is, $R(X,Y)\cdot \alpha=0$ whenever $X,Y$ belong to $TM$;
\item[(ii)] and $ \nabla R^{\perp}=0.$
\end{itemize}
\end{pro}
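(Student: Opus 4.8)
The plan is to exploit the parallel orthogonal splitting $TM=E_0\oplus E_1$ furnished by Proposition \ref{s1}, together with the two vanishing statements $R(E_0,TM)=0$ (which follows from the definition of $E_0$ and the pair symmetry of $R$, and is already used in the proof of Proposition \ref{s1}) and $R^{\perp}(E_0,TM)=0$ (Lemma \ref{ll1}(ii)), and the fact that $\nabla_X\alpha=0$ for $X\in E_1$ (Proposition \ref{s1}(iii)). Throughout I will repeatedly use that $E_0$ and $E_1$ are $\nabla$-parallel, so that $\nabla_Z$ preserves each of them.

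For (i), I would argue by cases on the arguments $X,Y$ of $R(X,Y)\cdot\alpha$. If one of them, say $X$, lies in $E_0$, then $R(X,Y)=0$ and $R^{\perp}(X,Y)=0$, so all three terms in
\[
(R(X,Y)\cdot\alpha)(Z,W)=R^{\perp}(X,Y)\alpha(Z,W)-\alpha(R(X,Y)Z,W)-\alpha(Z,R(X,Y)W)
\]
vanish at once. The only remaining case is $X,Y\in E_1$, which I would settle using the Ricci identity $R(X,Y)\cdot\alpha=\nabla^2_{Y,X}\alpha-\nabla^2_{X,Y}\alpha$: since $\nabla_Y\alpha=0$ for the $E_1$-section $Y$ and $\nabla_XY\in E_1$ by parallelism, the operator identity $\nabla^2_{X,Y}\alpha=\nabla_X(\nabla_Y\alpha)-\nabla_{\nabla_XY}\alpha$ gives $\nabla^2_{X,Y}\alpha=0$, and symmetrically $\nabla^2_{Y,X}\alpha=0$, whence $R(X,Y)\cdot\alpha=0$.

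For (ii), I would again split on the direction $Z$ of differentiation and first record that $(\nabla_ZR^{\perp})(X,Y)=0$ whenever $X$ or $Y$ lies in $E_0$: this is immediate from $R^{\perp}(E_0,TM)=0$ and the parallelism of $E_0$ upon writing out $(\nabla_ZR^{\perp})(X,Y)=\nabla^{\perp}_Z(R^{\perp}(X,Y))-R^{\perp}(\nabla_ZX,Y)-R^{\perp}(X,\nabla_ZY)$. It therefore suffices to treat $X,Y\in E_1$. For $Z\in E_1$ I would differentiate the Ricci equation \eqref{curv2}: since its right-hand side is a parallel quadratic contraction of the shape operator, equivalently of $\alpha$, and $\nabla_Z\alpha=0$, the Leibniz rule gives $\nabla_ZR^{\perp}=0$. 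For $Z\in E_0$ I would invoke the second Bianchi identity for the normal connection, $(\nabla_ZR^{\perp})(X,Y)+(\nabla_XR^{\perp})(Y,Z)+(\nabla_YR^{\perp})(Z,X)=0$; the last two terms vanish by the preliminary remark because $Z\in E_0$ sits as a curvature argument, forcing $(\nabla_ZR^{\perp})(X,Y)=0$. Assembling the cases yields $\nabla R^{\perp}=0$.

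The step I expect to be the main obstacle is the $E_0$-direction of (ii): there is no direct control of $\nabla_Z\alpha$ for $Z\in E_0$, and the conclusion is rescued only by combining the structural vanishing $R^{\perp}(E_0,TM)=0$ with the differential Bianchi identity. Making the ``parallel quadratic contraction'' argument for the $E_1$-direction fully rigorous, namely checking that $\nabla_Z\alpha=0$ really propagates through \eqref{curv2} to $\nabla_ZR^{\perp}=0$, is routine but should be stated with care.
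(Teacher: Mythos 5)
Your proof is correct, and while part (i) coincides with the paper's argument (the same case split: $X\in E_0$ kills all three terms via $R(E_0,TM)TM=0$ and Lemma \ref{ll1}(ii), while $X,Y\in E_1$ gives $\nabla^2_{X,Y}\alpha=0$ and hence the vanishing after anti-symmetrisation), your part (ii) takes a genuinely different route in its second half. Both you and the paper start by differentiating $R^{\perp}(E_0,TM)=0$, using parallelism of $E_0$, to get $(\nabla_{TM}R^{\perp})(E_0,TM)=0$, leaving only $(\nabla R^{\perp})(E_1,E_1)$. The paper then treats \emph{all} derivative directions $U\in TM$ uniformly: differentiating the Ricci equation \eqref{curv2} produces commutators in $(\nabla_U A)_{\xi}$, and each term vanishes because in a space form the right-hand side of the Codazzi--Mainardi equation \eqref{c-m} is zero, so $\nabla\alpha$ is totally symmetric and $\nabla_{E_1}\alpha=0$ forces $(\nabla_U A)_{\xi}Z=0$ for every $U\in TM$, $Z\in E_1$. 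You instead split the derivative direction: for $Z\in E_1$ the Leibniz rule applied to \eqref{curv2} together with $\nabla_Z A=0$ (equivalent to $\nabla_Z\alpha=0$ since the metrics are parallel) gives $(\nabla_Z R^{\perp})=0$ outright, with no appeal to Codazzi; and for $Z\in E_0$ you close the loop with the second Bianchi identity for the normal connection, which is legitimate: the curvature of any connection on a vector bundle satisfies the cyclic identity $(\nabla_Z R^{\perp})(X,Y)+(\nabla_X R^{\perp})(Y,Z)+(\nabla_Y R^{\perp})(Z,X)=0$ once a torsion-free connection on the base is used to differentiate, and the two terms with $Z\in E_0$ in a curvature slot die by your preliminary remark. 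What the paper's route buys is the stronger pointwise fact that $(\nabla_U A)_{\xi}$ annihilates $E_1$ for every direction $U$, a fact of the same Codazzi-type flavour that reappears in Lemma \ref{alpha} and Proposition \ref{las}; what your route buys is economy of hypotheses at this step, since it uses only the formal inputs (Proposition \ref{s1}(iii), Lemma \ref{ll1}(ii), parallelism of $E_0$ and $E_1$) plus Leibniz and Bianchi, and would therefore survive verbatim in any ambient space for which those inputs are available, without invoking the space-form-specific vanishing of the Codazzi right-hand side.
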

\begin{proof}
(i) From the lemma and the definition of $E_0$ it follows that the tensor 
\begin{equation*}
(R(X,Y) \cdot \alpha)(Z_1,Z_2)=R^{\perp}(X,Y)(\alpha_{Z_1}Z_2)-\alpha(R(X,Y)Z_1,Z_2)-\alpha(Z_1,R(X,Y)Z_2)
\end{equation*}
vanishes when $X$ is in $E_0$. For $X,Y$  in $E_1$ we have $\nabla^2_{X,Y} \alpha=0$ by (iii) in Proposition \ref{s1} and 
$R(X,Y) \cdot \alpha=0$ follows.\\
(ii) By differentiation in the lemma above 
$$ (\nabla_{TM}R^{\perp})(E_0,TM)=0$$
since $E_0$ is parallel inside $TM$. There remains to show that $(\nabla_{TM}R^{\perp})(E_1,E_1)=0$.
For $X,Y$ in $E_1, U$ in $TM$ as well as $\xi_1, \xi_2$ in $NM$ we have 
\begin{equation*}
\langle (\nabla_{U}R^{\perp})(X,Y)\xi_1, \xi_2 \rangle=\langle [(\nabla_UA)_{\xi_1},A_{\xi_2}]X,Y\rangle+
\langle [A_{\xi_1},(\nabla_UA)_{\xi_2}]X,Y\rangle
\end{equation*}
by differentiation in the Ricci equation \eqref{curv2}. Moreover
\begin{eqnarray*}
\langle [(\nabla_UA)_{\xi_1},A_{\xi_2}]X,Y\rangle&=&\langle (\nabla_UA)_{\xi_1}A_{\xi_2}X,Y \rangle-
\langle A_{\xi_2}(\nabla_UA)_{\xi_1}X,Y \rangle\\
&=&\langle (\nabla_UA)_{\xi_1}Y,A_{\xi_2}X\rangle-\langle A_{\xi_2}(\nabla_UA)_{\xi_1}X,Y \rangle.
\end{eqnarray*}
Now for space forms the right hand side in the Codazzi-Mainardi equation \eqref{c-m} vanishes; 
since $\nabla_{E_1}A=0$ by (iii) in Proposition \ref{s1} each summand above is zero. A similar argument shows that the same holds for the second commutator in the expression of $\nabla R^{\perp}$ and the claim is proved.
\end{proof}
We consider 
$$ N_0M=\{\xi \in NM:R^{\perp}(TM,TM)\xi=0\}
$$
with orthogonal complement $N_1M$ in $NM$. 
\begin{pro} \label{pro3}
Let $(M,g)$ be a $k$-parallel isometric immersion into a space form. We have:
\begin{itemize}
 \item[(i)] $N_0M, N_1M$ are parallel;
\item[(ii)] $N_0M$ is flat.
\end{itemize}
\end{pro}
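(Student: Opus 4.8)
The plan is to prove the two claims for the normal subbundles $N_0M$ and $N_1M$ by mimicking the strategy that yielded Proposition~\ref{s1} for the tangential splitting $TM = E_0 \oplus E_1$, now using the results already established for the normal connection. The defining condition $N_0M = \{\xi : R^{\perp}(TM,TM)\xi = 0\}$ is the normal-bundle analogue of $E_0$, so I expect the curvature $R^{\perp}$ to play the role that $R$ played before. The essential new input is part~(ii) of Proposition~\ref{ll2}, namely $\nabla R^{\perp} = 0$: this is exactly what makes the normal curvature behave like a parallel algebraic structure, just as $\nabla R = 0$ did in the tangential case.

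\textbf{Part (i): $N_0M$ and $N_1M$ are parallel.} First I would fix a local section $\xi$ of $N_0M$, so $R^{\perp}(X,Y)\xi = 0$ for all $X,Y \in TM$, and differentiate this identity in an arbitrary tangent direction $U$. By the Leibniz rule,
\begin{equation*}
0 = \nabla^{\perp}_U\bigl(R^{\perp}(X,Y)\xi\bigr) = (\nabla_U R^{\perp})(X,Y)\xi + R^{\perp}(X,Y)(\nabla^{\perp}_U \xi) + R^{\perp}(\nabla_U X,Y)\xi + R^{\perp}(X,\nabla_U Y)\xi,
\end{equation*}
where I have extended $X,Y$ suitably. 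The last two terms vanish because $\xi \in N_0M$, and the term $(\nabla_U R^{\perp})(X,Y)\xi = 0$ by part~(ii) of Proposition~\ref{ll2}. What remains is $R^{\perp}(X,Y)(\nabla^{\perp}_U \xi) = 0$ for all $X,Y$, which says precisely that $\nabla^{\perp}_U \xi \in N_0M$. Hence $N_0M$ is parallel with respect to $\nabla^{\perp}$, and since the normal connection is metric its orthogonal complement $N_1M$ is parallel as well.

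\textbf{Part (ii): $N_0M$ is flat.} Here the goal is to show that the restriction of $R^{\perp}$ to the subbundle $N_0M$ vanishes identically, i.e. that $N_0M$ is flat as a vector bundle with connection $\nabla^{\perp}$. The definition of $N_0M$ only guarantees $R^{\perp}(TM,TM)\xi = 0$ for $\xi \in N_0M$; flatness is the statement $\langle R^{\perp}(X,Y)\xi_1, \xi_2\rangle = 0$ for $\xi_1,\xi_2 \in N_0M$, which is immediate from this since $R^{\perp}(X,Y)\xi_1 = 0$. The point to verify is rather that $\nabla^{\perp}$ restricts to a genuine connection on $N_0M$, which is guaranteed by part~(i), so that the curvature of the restricted connection coincides with the restriction of $R^{\perp}$; the latter annihilates $N_0M$ by definition, giving flatness at once.

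The step I expect to require the most care is the bookkeeping in part~(i): one must make sure the extension of the tangent fields $X,Y$ is chosen so that the $R^{\perp}(\nabla_U X, Y)\xi$ terms really are controlled (for instance by extending $X,Y$ as parallel fields at the point, or by arguing tensorially so that these terms cancel against each other), and one must invoke $\nabla R^{\perp} = 0$ in the correct form for a tensor valued in $\mathrm{End}(NM)$. Part~(ii) is then essentially a formal consequence once part~(i) secures that $\nabla^{\perp}$ preserves $N_0M$. Throughout, the conceptual driver is that Proposition~\ref{ll2}(ii) upgrades $R^{\perp}$ to a parallel object, after which both claims follow by the same parallel-subbundle reasoning used for $E_0,E_1$ in Proposition~\ref{s1}.
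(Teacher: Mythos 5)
Your proof is correct and is essentially the paper's own argument, just written out in full: the paper likewise derives (i) directly from $\nabla R^{\perp}=0$ (Proposition~\ref{ll2}(ii)) via the Leibniz-rule computation you give, and obtains (ii) from (i) together with the definition of $N_0M$. Note only that your worry about extending $X,Y$ is moot, since $R^{\perp}(V,W)\xi=0$ holds for \emph{all} tangent vectors $V,W$ when $\xi\in N_0M$, so the terms $R^{\perp}(\nabla_UX,Y)\xi$ and $R^{\perp}(X,\nabla_UY)\xi$ vanish regardless of the extension.
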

\begin{proof}
(i) is a direct consequence of (ii) in Proposition \ref{ll2} while (ii) follows from (i) and the definition of $N_0M$.
\end{proof}

Let us consider the following subbundle of the normal bundle of $M$
$$
F_0:= \span \{(\nabla^l_{X_1,\dots, X_l} \alpha)(Y,Z) : 1\le l\le k-1, X_1,\dots,X_l,Y,Z \in TM \} \subseteq NM.
$$
By applying Proposition \ref{s1}, (iii) and Lemma \ref{ll1}, (i) and using the Codazzi-Mainardi equation we can also write this as 
\begin{equation}\label{F02} 
F_0= \span \{(\nabla^l_{X_1,\dots, X_l} \alpha)(Y,Z) : 1\le l\le k-1, X_1,\dots,X_l,Y,Z \in E_0 \} \subseteq NM.
\end{equation}
\begin{pro} \label{x} Let $(M,g)$ be a $k$-parallel isometric immersion into a space form. The following hold: 
\begin{itemize}
\item[(i)] the subbundle 
$ F_0 \subseteq NM$
is parallel w.r.t. the normal connection;
\item[(ii)] the bundle $F_0$ is flat, that is $R^{\perp}(TM,TM)F_0=0$.
\end{itemize}
\end{pro}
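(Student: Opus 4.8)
The plan is to prove the two assertions about $F_0$ by induction on the order $l$ of the derivatives $\nabla^l \alpha$ generating $F_0$, running the two statements in tandem since parallelism and flatness feed into each other at successive orders. The key structural input is the alternative description \eqref{F02}, which says $F_0$ is spanned by the $(\nabla^l_{X_1,\dots,X_l}\alpha)(Y,Z)$ with \emph{all} arguments taken in the flat, parallel distribution $E_0$; this is what makes the differentiation manageable, because on $E_0$ the curvature $R$ vanishes and the second fundamental form restricted to $E_0$ governs everything.

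First I would establish (i). To show $F_0$ is $\nabla^\perp$-parallel it suffices to check that for any generator $(\nabla^l_{X_1,\dots,X_l}\alpha)(Y,Z)$ with $X_i,Y,Z \in E_0$ and any $U \in TM$, the normal derivative $\nabla^\perp_U\bigl((\nabla^l_{X_1,\dots,X_l}\alpha)(Y,Z)\bigr)$ again lies in $F_0$. Expanding this derivative via the Leibniz rule produces one term of the form $(\nabla^{l+1}_{U,X_1,\dots,X_l}\alpha)(Y,Z)$ together with terms in which $U$ differentiates the tangential arguments, giving expressions $(\nabla^l_{\dots}\alpha)(\nabla_U X_i,\dots)$ and $(\nabla^l_{\dots}\alpha)(\dots,\nabla_U Y,\dots)$. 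For $1 \le l \le k-2$ the leading term is a generator of $F_0$ at order $l+1$, hence lies in $F_0$ by definition; for $l=k-1$ the leading term is $\nabla^k\alpha=0$ by $k$-parallelism and drops out. The remaining Leibniz terms are controlled because $E_0$ is parallel in $TM$ (Proposition \ref{s1}(i)), so $\nabla_U X_i$ and $\nabla_U Y$ again lie in $E_0$, keeping every term a generator of $F_0$. This closes the parallelism under $\nabla^\perp$.

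Next I would prove (ii), that $R^\perp(TM,TM)F_0=0$, by showing $R^\perp(TM,TM)(\nabla^l\alpha|_{E_0})=0$ for each $l$ with $1\le l\le k-1$, again by a descending induction of exactly the type used in Lemma \ref{ll1}. The mechanism is the same: from $\nabla^2(\nabla^{l-1}\alpha)$ being governed by $k$-parallelism, anti-symmetrising the top two covariant slots converts a second covariant derivative into the action of the curvature operator $R(X,Y)\cdot$, and since all tangential arguments sit in $E_0$ the tangential part $R(E_0,\cdot)E_0=0$ leaves only the normal curvature $R^\perp$ acting on the lower-order image. Pairing the resulting identity with a vector in $\im\nabla^{l-2}\alpha$ and invoking the positivity argument exactly as in Lemma \ref{ll1} upgrades a ``squared'' vanishing to the genuine vanishing $R^\perp(TM,TM)(\im\nabla^{l-2}\alpha|_{E_0})=0$, and iterating over $l$ exhausts all generators of $F_0$.

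The main obstacle I expect is bookkeeping in the inductive step for (ii): one must keep careful track of which covariant slots are tangential versus normal when commuting $\nabla$ past $\nabla^\perp$, so that the anti-symmetrisation really isolates the curvature term $R^\perp(E_0,TM)$ and not a mixture involving $R(E_0,TM)E_1$ or cross-terms between the flat and non-flat parts. This is precisely where the restriction of the arguments to $E_0$ in \eqref{F02} is essential, since $R(E_0,TM)=0$ kills the tangential curvature contributions and Lemma \ref{ll1}(i) supplies the base case $R^\perp(E_0,TM)(\im\nabla^{k-s}\alpha)=0$ that seeds the positivity argument. Once the curvature action is correctly isolated, both the parallelism and the flatness follow from the now-familiar anti-symmetrisation-plus-positivity scheme.
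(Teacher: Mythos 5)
Your treatment of (i) is correct and is essentially the paper's argument (the paper simply observes that parallelism of $F_0$ is a direct consequence of $\nabla^k\alpha=0$: the Leibniz expansion of $\nabla^\perp_U$ applied to a generator produces only generators, the top term dying when $l=k-1$). The genuine problem is in (ii). You propose to re-run the descending induction of Lemma \ref{ll1}, but that mechanism does not close in your setting. In Lemma \ref{ll1} the statement being propagated concerns the \emph{full} image $\im \nabla^{l}\alpha$ (all arguments in $TM$), while one slot of the curvature is kept in $E_0$; this is exactly what allows one to apply $R^{\perp}(E_0,TM)$ to the left-hand side of the Ricci identity, whose $\nabla^{l}\alpha$-terms carry the two \emph{arbitrary} anti-symmetrised vectors in their first slots. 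In your version the inductive hypothesis is only the vanishing of $R^{\perp}(TM,TM)$ on values of $\nabla^{l}\alpha$ with \emph{all} arguments in $E_0$, whereas the Ricci identity that is supposed to produce $R^{\perp}(U,V)\bigl[(\nabla^{l-2}\alpha)(E_0,\dots,E_0)\bigr]$ for $U,V\in E_1$ has on its other side differences of terms $\nabla^{l}_{U,V,X_1,\dots}\alpha$ whose first two slots lie in $E_1$ --- mixed-argument values that the hypothesis does not cover. So the step ``apply $R^{\perp}$ again and invoke positivity'' cannot be executed. Closing the gap requires proving separately that $\nabla^{l}\alpha$ vanishes whenever one of its slots lies in $E_1$ (a level-by-level refinement of \eqref{F02}); that is precisely the ``bookkeeping'' you defer, and it is the actual missing mathematical content, not a routine verification.

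Moreover, the induction is unnecessary, and the reason you are driven to it is that you never use Proposition \ref{ll2}(i), which is the paper's key input. Since the immersion is semi-parallel, $R(U,V)\cdot\alpha=0$ for \emph{all} $U,V\in TM$, and since $\nabla R=0$ and $\nabla R^{\perp}=0$ have already been established, differentiating this identity $l$ times gives directly $R(U,V)\cdot\nabla^{l}\alpha=0$ for all $U,V\in TM$ and $1\le l\le k-1$. Evaluating on generators with all arguments in $E_0$, every tangential term contains a factor $R(U,V)X_j$ with $X_j\in E_0$, which vanishes by the very definition of $E_0$; what survives is exactly
\begin{equation*}
R^{\perp}(U,V)\bigl((\nabla^{l}_{X_1,\dots,X_l}\alpha)(Y,Z)\bigr)=0 .
\end{equation*}
No anti-symmetrisation, descending induction, or positivity argument is needed, and no mixed-argument terms ever appear.
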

\begin{proof}
(i) is a direct consequence of $\nabla^k\alpha=0$.\\ 
(ii) We use (\ref{F02}) for the description of $F_0$. Let $X_1,\dots,X_l,Y,Z$ be in $E_0$ and $1\le l\le k-1$.  We have  $R(U,V)\cdot \nabla^l\alpha=0$ for all $U, V\in TM$ by differentiating the equation $R(U,V)\cdot \alpha=0$, which we know from Proposition \ref{ll2}, (i). This gives
\begin{eqnarray*}
\lefteqn{R^{\perp}(U,V)((\nabla^l_{X_1,\dots, X_l} \alpha)(Y,Z))=\sum_{j=1}^l (\nabla^l_{X_1,\dots,R(U,V)X_j,\dots, X_l}\alpha)(Y,Z)}\\&& +(\nabla^l_{X_1,\dots, X_l} \alpha)(R(U,V)Y,Z)+(\nabla^l_{X_1,\dots, X_l} \alpha)(Y, R(U,V)Z)
\end{eqnarray*}
Since $E_0$ is flat, the right hand side of the above equation vanishes. \end{proof}
\begin{lema} \label{alpha} If $(M,g)$ is a $k$-parallel isometric immersion into a space form, then
 \begin{itemize}
  \item[(i)] $\alpha(E_0,E_0) \subseteq N_0M$; 
\item[(ii)] $\alpha(E_0,E_1) \subseteq N_1M$;
\item[(iii)] $A_{N_0M}E_i\subseteq E_i$, $i=1,2$;
\item[(iv)]  $A_{F_0}E_1=0$.
 \end{itemize}
\end{lema}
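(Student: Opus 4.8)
The plan is to prove the four items essentially in the order (i), (ii), (iii), (iv), deducing (iii) from (ii) and spending the real effort on (iv).

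For (i) I would feed two vectors of $E_0$ into the semi-parallelism relation $R(U,V)\cdot\alpha=0$ of Proposition \ref{ll2}(i). Since $R(U,V)X=0$ for every $X\in E_0$ and all $U,V\in TM$, the two correction terms in $(R(U,V)\cdot\alpha)(X,Y)=R^{\perp}(U,V)\alpha(X,Y)-\alpha(R(U,V)X,Y)-\alpha(X,R(U,V)Y)$ drop out, leaving $R^{\perp}(U,V)\alpha(X,Y)=0$ for all $U,V$, which is exactly $\alpha(X,Y)\in N_0M$. For (ii) I would combine the same relation with the description $E_1=\span\{R(X,Y)Z\}$: writing $Y=R(A,B)C\in E_1$ and using $R(A,B)X=0$ for $X\in E_0$ gives $\alpha(X,R(A,B)C)=R^{\perp}(A,B)\alpha(X,C)$, so that for $\xi\in N_0M$ one has $\langle\alpha(X,Y),\xi\rangle=\langle R^{\perp}(A,B)\alpha(X,C),\xi\rangle=-\langle\alpha(X,C),R^{\perp}(A,B)\xi\rangle=0$, i.e.\ $\alpha(E_0,E_1)\perp N_0M$. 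Item (iii) (for $i=0,1$) is then immediate from (ii) and self-adjointness of the shape operator: for $\xi\in N_0M$, $X\in E_0$, $Y\in E_1$ one has $\langle A_\xi X,Y\rangle=-\langle\alpha(X,Y),\xi\rangle=0$, so $A_\xi$ preserves $E_0$ and hence its orthogonal complement $E_1$.

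Item (iv) is where I expect the main obstacle. First I would reduce it to an orthogonality statement. As $F_0$ is flat by Proposition \ref{x}(ii), we have $F_0\subseteq N_0M$. For $\xi\in F_0$ and $Y\in E_1$, the identity $\langle A_\xi Y,W\rangle=-\langle\alpha(Y,W),\xi\rangle$ and the splitting $W=W_0+W_1$, $W_i\in E_i$, show that $A_\xi Y=0$ is equivalent to $\langle\alpha(Y,W_1),\xi\rangle=0$ for $W_1\in E_1$, since the $E_0$-part is killed by (ii) ($\alpha(Y,W_0)\in N_1M$ is orthogonal to $\xi\in N_0M$). Using the description \eqref{F02} I may take $\xi=(\nabla^l_{X_1,\dots,X_l}\alpha)(P,Q)$ with $1\leq l\leq k-1$ and $X_1,\dots,X_l,P,Q\in E_0$, so everything reduces to proving $\langle\alpha(Y,Z),(\nabla^l_{X_1,\dots,X_l}\alpha)(P,Q)\rangle=0$ whenever $Y,Z\in E_1$ and $X_i,P,Q\in E_0$.

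The preliminary fact I would establish for this is that $(\nabla^m\alpha)(V,\cdot)=0$ for every $m\geq 1$ as soon as one evaluation argument $V$ lies in $E_1$. For $m=1$ this follows from Codazzi--Mainardi \eqref{c-m}, whose right-hand side vanishes in a space form so that $\nabla\alpha$ is totally symmetric, together with $\nabla_V\alpha=0$ from Proposition \ref{s1}(iii): $(\nabla_X\alpha)(V,W)=(\nabla_V\alpha)(X,W)=0$. For the inductive step I would extend all vectors by parallel transport, which is legitimate because $E_0,E_1$ are parallel by Proposition \ref{s1}(i); then $(\nabla^{m+1}\alpha)(V,W)=\nabla^{\perp}_{X_0}((\nabla^m\alpha)(V,W))$, and the right-hand side vanishes because the inner expression is identically zero along the chosen extensions.

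With this in hand, and still using parallel extensions, I would finish as follows. Differentiating $\alpha(Y,Z)$ produces only terms $(\nabla^m\alpha)(Y,Z)$ with $m\geq 1$, all zero since $Y\in E_1$, so the Leibniz rule gives $\langle\alpha(Y,Z),(\nabla^l_{X_1,\dots,X_l}\alpha)(P,Q)\rangle=\nabla^l_{X_1,\dots,X_l}\langle\alpha(Y,Z),\alpha(P,Q)\rangle$. I would then rewrite the scalar by the Gauss equation \eqref{g1}: since $R$ vanishes whenever one entry lies in $E_0$ and $R^N(Y,P,Z,Q)$ is a constant-curvature term (a multiple of $\langle Y,Z\rangle\langle P,Q\rangle$, because $\langle Y,Q\rangle=\langle P,Z\rangle=0$), one obtains $\langle\alpha(Y,Z),\alpha(P,Q)\rangle=\langle\alpha(Y,Q),\alpha(P,Z)\rangle+(\text{a parallel term})$. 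Applying $\nabla^l$ with $l\geq 1$ in $E_0$-directions kills the parallel term, and in every Leibniz term of $\langle\alpha(Y,Q),\alpha(P,Z)\rangle$ at least one factor carries $\geq 1$ derivative while evaluating $\alpha$ on an $E_1$-argument (either $Y$ or $Z$); by the preliminary fact each such term vanishes. Hence the $l$-th derivative is zero, which gives the required identity and completes (iv). The decisive difficulty is thus concentrated in this last item, namely the auxiliary vanishing of higher derivatives of $\alpha$ on $E_1$-arguments together with the Gauss rewriting that trades the uncontrolled term $\alpha(E_1,E_1)$ against the manifestly $E_1$-sensitive term $\alpha(E_0,E_1)$.
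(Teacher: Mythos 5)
Your proof is correct and follows essentially the same route as the paper: items (i)--(iii) via semi-parallelism, flatness of $E_0$ and self-adjointness of the shape operator, and (iv) by reducing to $\alpha(E_1,E_1)\perp F_0$ via $F_0\subseteq N_0M$ and then differentiating the mixed Gauss identity in $E_0$-directions, with the unwanted terms killed by $\nabla_{E_1}\alpha=0$ and the Codazzi symmetry. Your explicit inductive lemma that $(\nabla^m_{U_1,\dots,U_m}\alpha)(V,\cdot)=0$ for $V\in E_1$, $m\geq 1$ simply makes precise what the paper's phrase ``after differentiation, using that $\nabla_{E_1}\alpha=0$'' leaves implicit.
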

\begin{proof} (i) Since $(M,g)$ is semiparallel by Proposition \ref{ll2} and since $E_0$ is flat we have
$$R(U,V)^\perp( \alpha(X,Y))= \alpha(R(U,V)X,Y)+\alpha(X, R(U,V)Y)=0$$
for all $X,Y\in E_0$ and all $U,V\in TM$.

(ii)  Using again that $(M,g)$ is semiparallel and that $E_0$ is flat we obtain
$$R^\perp(U,V)( \alpha(X,Y))= \alpha(R(U,V)X,Y)$$
for all $U,V,X\in TM$ and $Y\in E_0$. The claim now follows from $E_1=\span \{ R(X,Y)Z : X,Y,Z \in TM\}$.

(iii) follows from (ii). Indeed,
$$\langle A_{N_0M} E_1, E_0\rangle = \langle E_1, A_{N_0M} E_0\rangle = \langle \alpha(E_1,E_0),N_0M\rangle =0.$$

(iv) We have to show $\alpha(E_1,TM)\perp F_0$. Obviously, $\alpha(E_1,E_0)\perp F_0$ by (ii) since Proposition \ref{x}, (ii) gives $F_0\subset N_0M$. It remains to show $\alpha(E_1,E_1)\perp F_0$. Take $Y_0,Z_0\in E_0$ and $Y_1,Z_1\in E_1$.  
Because $R(E_0,E_1,E_0,E_1)=0$ the Gauss equation \eqref{g1} yields 
\begin{equation*}
\langle \alpha(Y_1,Y_0), \alpha(Z_1,Z_0) \rangle-\langle \alpha(Y_1,Z_1), \alpha(Y_0,Z_0) \rangle=c \langle Y_0,Z_0\rangle 
\langle Y_1,Z_1\rangle .
\end{equation*}
After differentiation, using that $\nabla_{E_1}\alpha=0$ the Codazzi-Mainardi equation leads to 
$$\langle \alpha(Y_1,Z_1), (\nabla^l_{X_1,\dots, X_l} \alpha)(Y_0,Z_0)\rangle=0$$
for all $1\le l\le k-1$ and $X_1,\dots, X_l\in E_0$. Now  it follows from (\ref{F02}) that $\alpha(E_1,E_1)$ is orthogonal to $F_0$. 
\end{proof}
Now we will define a further decomposition of $E_0$ into subbundles $E_0'\oplus E_0''$ such that Moore's criterion applies to $D_0:=E_1\oplus E_0'$ and $D_1:=E_0''$.
Let $E_0^{\prime \prime}$ be spanned by elements of the form 
$$ A_{\xi}X,\ (\nabla^l_{U_1,\dots U_l}A)_{\xi}X,\quad l=1,\dots,k-1, 
$$
where $\xi$ belongs to $F_0$, $X$ is in $TM$ and $U_1,\dots,U_l$ are in $TM$. Equivalently, it suffices to take $X\in E_0$. Indeed, this follows from Lemma \ref{alpha}, (iv).

Obviously, $E_0''\subset E_0$ by Lemma \ref{alpha}, (iii). We will denote by $E_0^{\prime}$ the orthogonal complement of $E_0^{\prime \prime}$ in $E_0$.  

\begin{pro} \label{las}
If $(M,g)$ is a $k$-parallel isometric immersion into a space form, then
\begin{itemize}
\item[(i)] $E_0'$ and $E_0^{\prime \prime}$ are parallel w.r.t. $\nabla$;
\item[(ii)] $E_0^{\prime} \subseteq \{ X \in E_0: A_{F_0}X=0 \}$;
\item[(iii)]$E_0^{\prime}\subseteq \{X \in E_0: \nabla_X \alpha=0 \}$;
\end{itemize}
\end{pro}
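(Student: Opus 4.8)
The plan is to dispatch the three items in the order $(i)$, $(ii)$, $(iii)$, reducing each to bookkeeping of the generators of $E_0''$ against the parallel building blocks already in place ($E_0,E_1,F_0$ parallel, $E_0$ flat, semiparallelity). Throughout I may assume $k\geq 2$, since for $k=1$ the immersion is parallel, $F_0=0$, $E_0''=0$, and all three claims are vacuous. I would also record at the outset that $\nabla^{k}A=0$: because $A$ and $\alpha$ are tied by the metric identity $\langle A_\xi X,Y\rangle=-\langle\alpha(X,Y),\xi\rangle$ and the relevant connections are metric, the hypothesis $\nabla^{k}\alpha=0$ transfers verbatim to the shape operator.

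For $(i)$, since $E_0=E_0'\oplus E_0''$ is an orthogonal splitting of the parallel distribution $E_0$ (Proposition \ref{s1}(i)), it is enough to prove that $E_0''$ is parallel; then $E_0'$ is automatically parallel as its orthogonal complement inside $E_0$. I would show that $\nabla_W$ carries each generator of $E_0''$ back into $E_0''$ for every $W\in TM$. Differentiating a generator $A_\xi X$ gives $(\nabla_W A)_\xi X+A_{\nabla^\perp_W\xi}X+A_\xi(\nabla_W X)$, where the first term is an $l=1$ generator, while $\nabla^\perp_W\xi\in F_0$ (Proposition \ref{x}(i)) and $\nabla_W X\in E_0$ (Proposition \ref{s1}(i)) keep the other two among the $l=0$ generators. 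Differentiating a higher generator $(\nabla^l_{U_1,\dots,U_l}A)_\xi X$ similarly produces the $(l+1)$-st derivative term plus corrections in which $W$ hits some $U_i$, $\xi$, or $X$; the corrections are again generators (the directions stay in $TM$, $\nabla^\perp_W\xi\in F_0$, $\nabla_W X\in E_0$), and when $l=k-1$ the leading term is a contraction of $\nabla^{k}A=0$ and drops out. Hence $\nabla_W E_0''\subseteq E_0''$.

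For $(ii)$, fix $X\in E_0'$ and $\xi\in F_0\subseteq N_0M$. By Lemma \ref{alpha}(iii) we have $A_\xi X\in E_0$, so it suffices to see that $A_\xi X\perp E_0$. For any $Z\in E_0$, self-adjointness of the shape operator gives $\langle A_\xi X,Z\rangle=\langle X,A_\xi Z\rangle$, and $A_\xi Z$ is an $l=0$ generator lying in $E_0$, hence a section of $E_0''$; as $X\in E_0'$ is orthogonal to $E_0''$ the pairing vanishes, so $A_\xi X=0$. For $(iii)$ I would first invoke the Codazzi--Mainardi equation \eqref{c-m}, whose right-hand side vanishes for a space form, to conclude that $\nabla\alpha$ is totally symmetric. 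Combined with $\nabla_{E_1}\alpha=0$ (Proposition \ref{s1}(iii)), total symmetry kills any term of $(\nabla_X\alpha)(Y,Z)$ carrying an $E_1$-entry, so it remains to treat $(\nabla_X\alpha)(Y,Z)$ with $X\in E_0'$ and $Y,Z\in E_0$. By \eqref{F02} this vector lies in $F_0$, hence it is enough to pair it with an arbitrary $\eta\in F_0$. Using total symmetry to move the $E_0'$-vector $X$ into the metric-paired slot, $\langle(\nabla_X\alpha)(Y,Z),\eta\rangle=\langle(\nabla_Y\alpha)(Z,X),\eta\rangle=-\langle(\nabla_Y A)_\eta Z,X\rangle$, and $(\nabla_Y A)_\eta Z$ is an $l=1$ generator of $E_0''$ lying in $E_0$, hence orthogonal to $X\in E_0'$. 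Thus the pairing vanishes for all $\eta$ and $(\nabla_X\alpha)(Y,Z)=0$.

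The main obstacle is the closing step of $(iii)$: the argument works only because total symmetry of $\nabla\alpha$ lets me rewrite $(\nabla_X\alpha)(Y,Z)$ so that the $E_0'$-vector sits in the slot contracted by the metric while a genuine $E_0''$-generator $(\nabla_Y A)_\eta Z$ surfaces. The two companion facts on which this hinges — that every such generator actually lies inside $E_0$, so that orthogonality to $E_0'$ bites, and that $(\nabla_X\alpha)(E_0,E_0)\subseteq F_0$ — must be secured beforehand; the first rests on Lemma \ref{alpha}(iii) together with the parallelism of $E_0$ and $F_0$, and this is the one point of bookkeeping I would verify carefully before assembling the three parts.
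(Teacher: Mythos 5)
Your proof is correct and takes essentially the same route as the paper's: items (i) and (ii) are exactly the bookkeeping the paper compresses into ``follows directly from $E_0$, $F_0$ parallel and $\nabla^k\alpha=0$'' and ``direct consequence of the definition of $E_0'$'', while your (iii) uses the paper's three ingredients --- total symmetry of $\nabla\alpha$ from Codazzi--Mainardi, orthogonality of $E_0'$ to the $(\nabla A)_{F_0}$-generators of $E_0''$ via self-adjointness of the shape operator, and $(\nabla_X\alpha)(TM,TM)\subseteq F_0$. The only cosmetic difference is that you dispose of $E_1$-entries separately, which the paper handles implicitly by working with arbitrary $U\in TM$.
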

\begin{proof}
(i) follows directly from having $E_0$ and $F_0$ parallel and $\nabla^k \alpha=0$. 

(ii) is a direct consequence of the definition of $E_0'$.

(iii) Take $X\in E_0'$. We first note that $(\nabla_UA)_\xi X=0$ hence also $\langle (\nabla_U\alpha)(X,\cdot),\xi\rangle=0$ holds for all $U\in TM$ and $\xi \in F_0$ by the definition of $E_0'$. By the Codazzi-Mainardi equation we obtain $\langle (\nabla_X\alpha)(U,\cdot),\xi\rangle=0$ for all $U\in TM$ and $\xi \in F_0$. Since $(\nabla_X\alpha)(TM,TM)\subseteq F_0$ the claim follows.
\end{proof}
\begin{pro} \label{last}
$\alpha(E_1 \oplus E_0^{\prime}, E_0^{\prime \prime})=0$. 
\end{pro}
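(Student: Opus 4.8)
The plan is to reformulate the statement as an invariance property of the shape operators. Writing $D_0 = E_1 \oplus E_0'$ and $D_1 = E_0''$, the orthogonal decomposition $TM = D_0 \oplus D_1$ together with the symmetry of the shape operators gives, for $W \in D_0$, $V \in E_0''$ and $\eta \in NM$, the chain $\langle \alpha(W,V),\eta\rangle = -\langle A_\eta W, V\rangle = -\langle W, A_\eta V\rangle$; hence $\alpha(E_1\oplus E_0', E_0'') = 0$ is equivalent to $A_\eta E_0'' \subseteq E_0''$ for every $\eta \in NM$. I would establish this inclusion by checking it on the spanning set $\{(\nabla^l_{U_1,\dots,U_l}A)_\xi X\}$ of $E_0''$, where $\xi \in F_0$, $X, U_i \in TM$ and $0 \le l \le k-1$ (the case $l=0$ being $A_\xi X$).

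Two ingredients drive the argument. First, a \emph{commutation relation}: for $\xi \in F_0$ and any $\eta \in NM$ one has $[A_\xi, A_\eta] = 0$. This follows from the Ricci equation \eqref{curv2}, since $R^{\perp}(TM,TM)F_0 = 0$ by Proposition \ref{x}, (ii), whence $\langle[A_\xi, A_\eta]X,Y\rangle = \langle R^{\perp}(X,Y)\xi,\eta\rangle = 0$. Second, a \emph{vanishing relation}: for $W \in D_0$ and every $l \ge 1$ one has $(\nabla^l\alpha)(W,\cdot)=0$, equivalently $(\nabla^l A)_\eta W = 0$ for all $\eta$. To obtain this I would start from $\nabla_W\alpha = 0$ on $D_0$ (Proposition \ref{s1}, (iii) and Proposition \ref{las}, (iii)), then prove $\nabla_W(\nabla^m\alpha) = 0$ for $W \in D_0$ by induction on $m$, using that $D_0$ is parallel and that $R(U,V)\cdot\nabla^m\alpha = 0$ (the semiparallel identity of Proposition \ref{ll2}, (i), differentiated as in the proof of Proposition \ref{x}); the same identity together with Codazzi--Mainardi \eqref{c-m}, whose right hand side vanishes for space forms, makes each $\nabla^l\alpha$ totally symmetric, so a $D_0$-argument may be moved into an outermost derivative slot where it is killed by $\nabla_W(\nabla^{l-1}\alpha)=0$.

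With these in hand I would verify $A_\eta(\nabla^l A)_\xi X \in E_0''$ as follows. Choosing $\xi$ to be a $\nabla^{\perp}$-parallel local section of $F_0$ (legitimate since $F_0$ is flat and parallel), so that the derivative generators become iterated covariant derivatives $\nabla^l(A_\xi)$ of the endomorphism field $A_\xi$, I differentiate the identity $A_\eta A_\xi = A_\xi A_\eta$ of endomorphism fields $l$ times and apply the result to $X$. Every term of the Leibniz expansion is a product $(\nabla^a A_\eta)(\nabla^b A_\xi)$ or $(\nabla^a A_\xi)(\nabla^b A_\eta)$ with $a+b=l$, and each lies in $E_0''$: a term whose outer factor is a derivative of $A_\xi$ becomes, after application to a tangent vector, a generator of $E_0''$; a term whose outer factor is a derivative of $A_\eta$ is applied to a vector of the form $(\nabla^b A_\xi)(\cdot) \in E_0''$, and pairing with an arbitrary $W \in D_0$, using the symmetry of $(\nabla^a A_\eta)$ and the vanishing relation $(\nabla^a A_\eta)W = 0$, shows it is orthogonal to $D_0$, hence in $E_0''$. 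Summing, $A_\eta(\nabla^l A)_\xi X \in E_0''$, and as such elements span $E_0''$ we conclude $A_\eta E_0'' \subseteq E_0''$, which is the claim.

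The main obstacle is the family of higher-order generators $(\nabla^l A)_\xi X$. For the zeroth-order generators the commutation relation alone suffices, since $A_\eta A_\xi X = A_\xi A_\eta X \in \im A_\xi \subseteq E_0''$; but a naive attempt to treat the derivative generators by pairing $\langle A_\eta(\nabla^l A)_\xi X, W\rangle = \langle(\nabla^l A)_\xi X, A_\eta W\rangle$ directly is circular, because the $E_0''$-component of $A_\eta W$ is exactly what one is trying to control. Circumventing this is precisely the role of the differentiated commutation identity, whose bookkeeping, together with the proof that the vanishing relation $(\nabla^l A)_\eta W = 0$ persists to all orders $l$, is the technical heart of the argument.
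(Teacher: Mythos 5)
Your proposal is sound and reaches the proposition by a genuinely different route than the paper. The paper never proves invariance of $E_0''$ under \emph{all} shape operators. Instead it introduces $F_1=F_0^{\perp}$ and gets its key vanishing for free from the definition of $F_0$: since $\im \nabla^l\alpha\subseteq F_0$ for $1\le l\le k-1$, one has $(\nabla^l A)_{\xi_1}=0$ for every $\xi_1\in F_1$. Combining this with the Ricci--equation commutation $[A_{N_0M},A_{F_1}]=0$ and differentiating, the paper obtains the invariance $A_{F_1}E_0''\subseteq E_0''$ only for normal directions in $F_1$, and then splits the claim in two: $\alpha(E_0'',E_1)=0$ because $\alpha(E_0,E_1)\subseteq N_1M\subseteq F_1$ (Lemma \ref{alpha}, (ii)), and $\alpha(E_0',E_0'')=0$ because the invariance forces $\alpha(E_0',E_0'')\subseteq F_0$ while $A_{F_0}E_0'=0$ (Proposition \ref{las}, (ii)). You replace the paper's ``normal'' vanishing $(\nabla^l A)_{F_1}=0$, a one-line consequence of the definition of $F_0$, by the ``tangential'' vanishing $(\nabla^l A)_{\eta}W=0$ for $W\in D_0$, which costs you an induction on the order of differentiation together with the total symmetry of the tensors $\nabla^l\alpha$ (both of which do go through, using $\nabla R=0$, $\nabla R^{\perp}=0$, the differentiated semiparallelity, and the parallelism of $D_0$); in exchange you get the stronger, cleaner statement $A_\eta E_0''\subseteq E_0''$ for every $\eta\in NM$, with no case distinction between the $E_1$- and $E_0'$-parts. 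Both arguments rest on the same two pillars: the Ricci equation \eqref{curv2} plus flatness of $F_0$ (respectively $N_0M$), and a vanishing statement that makes the differentiated commutator close up on the generators of $E_0''$.

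One point in your Leibniz step needs care. Your expansion into terms $(\nabla^a A)_\xi(\nabla^b A)_\eta$ and $(\nabla^b A)_\eta(\nabla^a A)_\xi$ is valid only if the $\eta$-slot is differentiated \emph{tensorially}, i.e.\ if you differentiate the section $\eta\mapsto[A_\xi,A_\eta]$ of $N^{\ast}M\otimes\End TM$, which vanishes identically. If instead you literally take $\eta$ to be a local section of $NM$ and use honest derivatives of the endomorphism field $A_\eta$, then $\nabla^b(A_\eta)\neq(\nabla^b A)_\eta$: since $NM$ is not flat, $\eta$ cannot be chosen parallel, and extra terms of the form $A_{(\nabla^{\perp})^d\eta}$ carrying no derivative of $A$ appear. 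The vanishing relation does not apply to these, and pairing them against $D_0$ reproduces exactly the quantity you are trying to control --- the circularity you yourself warned against. The fix is either to work tensorially throughout (then your argument is complete as written, your notation $(\nabla^a A)_\eta$ being read as the tensorial derivative), or to run an induction on the order $l$ of the generator, the problematic terms being shape operators $A_\zeta$ applied to generators of order $a<l$, which the induction hypothesis handles.
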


\begin{proof} 
The orthogonal complement $F_1$ of $F_0$ clearly contains $N_1M$. Take $\xi_1\in F_1$. Then 
$$
\langle (\nabla^l_{U_1,\dots, U_l} A)_{\xi_1} X,Y\rangle = \langle (\nabla^l_{U_1,\dots, U_l}\alpha)(X,Y),\xi_1\rangle=0
$$
holds for all $X,Y,U_1,\dots, U_l\in TM$ since $(\nabla^l_{U_1,\dots, U_l}\alpha)(X,Y)\in F_0 $. Thus
\begin{equation}\label{xx}(\nabla^l_{U_1,\dots, U_l} A)_{\xi_1}=0
\end{equation}
The Ricci equation (\ref{curv2}) gives 
\begin{equation}\label{x1}[A_\xi,A_{\xi_1}]=0\end{equation}
 for all $\xi\in N_0M$ and $\xi_1\in F_1$, which implies 
\begin{equation}\label{x3}
 A_{\xi_1}(\im A_\xi)\subseteq \im A_\xi.
\end{equation}
 Differentiating (\ref{x1}) and taking into account (\ref{xx}) we obtain 
$$[(\nabla^l_{U_1,\dots, U_l} A)_{\xi},A_{\xi_1}]=0.$$
This gives
\begin{equation}\label{x2}A_{\xi_1} (\im (\nabla^l_{U_1,\dots, U_l} A)_{\xi}) \subseteq \im (\nabla^l_{U_1,\dots, U_l} A)_{\xi}\end{equation}
for all $U_1,\dots, U_l\in TM$.  Applying (\ref{x3}) and (\ref{x2}) to all $\xi\in F_0$ we obtain
\begin{equation} \label{x4}
A_{F_1} (E_0'')\subseteq E_0''.
\end{equation}
Since $N_1M \subseteq F_1$ this implies 
$$ \langle \alpha(E_0^{\prime \prime}, E_1),N_1M\rangle=\langle A_{N_1M}E_0^{\prime \prime}, E_1\rangle=0 
$$
which combined with $\alpha(E_0,E_1) \subseteq N_1M$ shows that $\alpha(E_0^{\prime \prime},E_1)=0$. 

At the same time from \eqref{x4}
$$ \langle \alpha(E_0^{\prime}, E_0^{\prime \prime}),F_1\rangle=\langle A_{F_1}E_0^{\prime \prime}, E_0^{\prime}\rangle=0 
$$
which shows that $ \alpha(E_0^{\prime}, E_0^{\prime \prime})$ is contained in $F_0$; however, $\langle \alpha(E_0^{\prime},
E_0''),F_0\rangle=\langle A_{F_0}E_0', E_0''\rangle=0 $ by (ii) in Proposition \ref{las} leads to its vanishing.

\end{proof}
$\\$
{\bf{Proof of Theorem \ref{main1}}}\\
The orthogonal splitting $TM=(E_1 \oplus E_0') \oplus E_0^{\prime \prime}$ is parallel w.r.t. to the Levi-Civita 
connection and hence induces a local splitting of $M$ with corresponding factors $M_1$ and $M_2$. Proposition 
\ref{last} makes that Molzan's generalisation of Moore's Theorem applies to 
the immersion of $M_1 \times M_2$ in $M^n(c)$; it yields a product decomposition of $f$ according to 
Definition \ref{def-0} with factors $f_i:M_i \to N_i, i=1,2$. 

From (iii) in Propositions \ref{s1} and \ref{las} it follows that $(\varphi_1 \circ f_1)(M_1)$ is parallel 
in $M^n(c)$. Since the image of $N_1$ in $M^n(c)$ is spherical it follows by a standard argument that 
$(\varphi_1 \circ f_1)(M_1)$ is parallel in $\varphi_1(N_1)$ as well. Hence $f_1:M_1 \to N_1$ is parallel since 
$\varphi_1:N_1 \to \varphi_1(N_1)$ is an isometry.

Since the distributions $E_0$ and hence $E_0''$ are flat so is $M_2$ and the same argument as above shows that 
it is immersed in $N_2$ as a $k$-parallel immersion.

In absence of a factor of type $E_1$ it follows 
from (ii) in Lemma \ref{ll1} that the normal bundle of $M_2$ is flat. $\square$

\begin{acknowledgements}
It is a pleasure to thank A.\,J.\,di Scala for useful comments on an earlier version of this paper.
\end{acknowledgements}

\end{document}